\newtheorem{theorem}{Theorem}
\newtheorem{lemma}[theorem]{Lemma}
\newtheorem{proposition}[theorem]{Proposition}
\newtheorem{claim}[theorem]{Claim}
\newtheorem{conjecture}[theorem]{Conjecture}
\newtheorem{corollary}[theorem]{Corollary}
\newtheorem{remark}[theorem]{Remark}
\newcommand{\x}{\mathbf{x}}
\newcommand{\p}{\mathbf{p}}
\newcommand{\q}{\mathbf{q}}
\newcommand{\y}{\mathbf{y}}
\newcommand{\z}{\mathbf{z}}
\newcommand{\uu}{\mathbf{u}}
\newcommand{\oo}{\mathbf{o}}
\newcommand{\ccc}{\mathbf{c}}
\newcommand{\B}{\mathbf{B}}
\newcommand{\Ee}{\mathbb{E}}
\newcommand{\Ed}{\Ee^d}
\newcommand{\noshow}[1]{}
\newcommand{\Sedm}{{\mathbb S}^{d-1}}
\newcommand{\Ze}{{\mathbb Z}}
\title{On contact numbers of locally separable unit sphere packings \footnote{Keywords and phrases: Euclidean $d$-space, spherical $d$-space, sphere packing, density, Voronoi tiling, isoperimetric inequality, contact graph, contact number, locally separable packing, contact number problem, crystallization. \newline \hspace*{.35cm} 2010 Mathematics Subject Classification: 52C17, 52C05.}}
\author{K\'{a}roly Bezdek\thanks{Partially supported by a Natural Sciences and 
Engineering Research Council of Canada Discovery Grant.}}
\date{}
\begin{document}

\maketitle

\begin{abstract}
\noindent  

The contact number of a packing of finitely many balls in Euclidean $d$-space is the number of touching pairs of balls in the packing. A prominent subfamily of sphere packings is formed by the so-called totally separable sphere packings: here, a packing of balls in Euclidean $d$-space is called totally separable if any two balls can be separated by a hyperplane such that it is disjoint from the interior of each ball in the packing. Bezdek, Szalkai and Szalkai (Discrete Math. 339(2): 668-676, 2016) upper bounded the contact numbers of totally separable packings of $n$ unit balls in Euclidean $d$-space in terms of $n$ and $d$. In this paper we improve their upper bound and extend that new upper bound to the so-called locally separable packings of unit balls. We call a packing of unit balls a locally separable packing if each unit ball of the packing together with the unit balls that are tangent to it form a totally separable packing. In the plane, we prove a crystallization result by characterizing all locally separable packings of $n$ unit disks having maximum contact number.
\end{abstract}

\section{Introduction}\label{sec:intro}

Let $\Ee^d$ denote the $d$-dimensional Euclidean space, with inner product $\langle\cdot ,\cdot\rangle$ and norm $\|\cdot\|$. Its unit sphere centered at the origin $\oo$ is $\Sedm:= \{\x\in\Ee^d\ |\ \|\x\|= 1\}$. The closed Euclidean ball of radius $r$ centered at $\p\in\Ed$ is denoted by $\B^d[\p,r]:=\{\q\in\Ed\ |\  \|\p-\q\|\leq r\}$.  Lebesgue measure on $\Ee^d$ is denoted by ${\rm vol}_d(\cdot)$. Let $\mathcal {P}:=\{\B^d[\ccc_i,1]|1\leq i\leq n\}$ be a packing of $n>1$ unit balls in $\Ee^d$ (i.e., let $\|\ccc_i-\ccc_j\|\geq 2$ for all $1\leq i<j\leq n$). Recall that the {\it contact graph} $G_c(\mathcal {P})$ of $\mathcal {P}$ is the simple graph whose vertices correspond to the packing elements $\B^d[\ccc_i,1],1\leq i\leq n$, and whose two vertices corresponding to say, $\B^d[\ccc_i,1]$ and $\B^d[\ccc_j,1]$ are connected by an edge if and only if  $\B^d[\ccc_i,1]$ and $\B^d[\ccc_j,1]$ are tangent to each other (i.e., $\|\ccc_i-\ccc_j\|=2$). The number of edges of $G_c(\mathcal {P})$ is called the {\it contact number} of  $\mathcal {P}$ and we denote it by $c(\mathcal {P})$. The {\it contact number problem} consists  of maximizing $c(\mathcal {P})$ for packings $\mathcal {P}$ of $n$ unit balls in $\Ee^d$. The answer to this challenging problem is known for all $n>1$ only in $\Ee^2$. Namely, it was proved in \cite{Har} (see also \cite{HeRa}) that the largest contact number of packings of $n>1$ unit disks in $\Ee^2$ is equal to $\lfloor 3n-\sqrt{12n-3}\rfloor$, where $\lfloor\cdot\rfloor$ denotes the lower integer part of the given real. For a comprehensive survey on the contact number problem see \cite{BeKh}. The closely related question on maximizing the degree of a vertex of $G_c(\mathcal {P})$ is an even older and notoriously difficult problem, which has been widely investigated under {\it kissing numbers}. Here, the kissing number $\tau_d$ of a $d$-dimensional unit ball is the largest number of non-overlapping unit balls that can simultaneously touch another unit ball in $\Ee^d$. The value of $\tau_d$ is only known for $d=1,2,3,4,8, 24$. For an extensive survey, on kissing numbers we refer the interested reader to \cite{BoDoMu}. The current best upper bounds for contact numbers of unit ball packings in $\Ee^d$, $d\geq 3$ are the following: It is proved in \cite{Be02} that if $\mathcal {P}$ is a packing of $n>1$ unit balls in $\Ee^d$, $d\geq 4$, then $c(\mathcal {P})\leq\left\lfloor\frac{1}{2}\tau_dn-\left(\frac{1}{2}\right)^d\delta_d^{-\frac{d-1}{d}}n^{\frac{d-1}{d}}\right\rfloor$, where $\delta_d$ stands for the supremum of the upper densities of unit ball packings in $\Ee^d$. On the other hand, it is proved in \cite{BeRe} that if $\mathcal {P}$ is a packing of $n>1$ unit balls in $\Ee^3$, then $c(\mathcal {P})\leq \left\lfloor6n-0.926n^{\frac{2}{3}}\right\rfloor$.

A prominent subfamily of sphere packings is formed by the so-called {\it totally separable (TS)} sphere packings: here, a packing  $\mathcal {P}$ of balls is called a {\it TS-packing} in $\Ee^d$ if any two balls of $\mathcal {P}$ can be separated by a hyperplane of $\Ee^d$ such that it is disjoint from the interior of each ball in $\mathcal {P}$. This notion was introduced by Fejes T\'oth and Fejes T\'oth \cite{FeFe} and has attracted significant attention. The recent paper \cite{BeSzSz} investigates contact graphs of TS-packings of unit balls by proving the following theorems. On the one hand, the largest number of non-overlapping unit balls that can simultaneously touch another unit ball forming a TS-packing in $\Ee^d$ is equal to $2d$ for all $d\geq 2$. On the other hand, the largest contact number of TS-packings of $n>1$ unit disks in $\Ee^2$ is equal to $\lfloor 2n-2\sqrt{n}\rfloor$. Moreover, if $\mathcal {P}$ is an arbitrary TS-packing of $n>1$ unit balls in $\Ee^3$, then $c(\mathcal {P})\leq\big\lfloor 3n - 1.346n^{\frac{2}{3}}\big\rfloor$. Finally, if $\mathcal {P}$ is any TS-packing of $n>1$ unit balls in $\Ee^d, d\geq 4$, then 
\begin{equation}\label{BeSzSz-Main}
c(\mathcal {P})\leq \bigg\lfloor dn-\frac{1}{2}d^{-\frac{d-1}{2}}n^{\frac{d-1}{d}}\bigg\rfloor .
\end{equation}

One of the main goals of this note is to improve the upper bound of (\ref{BeSzSz-Main}) and to extend it to the so-called {\it locally separable (LS)} unit ball packings. Here, we call a packing $\mathcal {P}$ of balls an {\it LS-packing} in $\Ee^d$ if each ball of $\mathcal {P}$ together with the balls of $\mathcal {P}$ that are tangent to it form a TS-packing in $\Ee^d$. Clearly, any TS-packing is also an LS-packing, but not necessarily the other way around (Figure~\ref{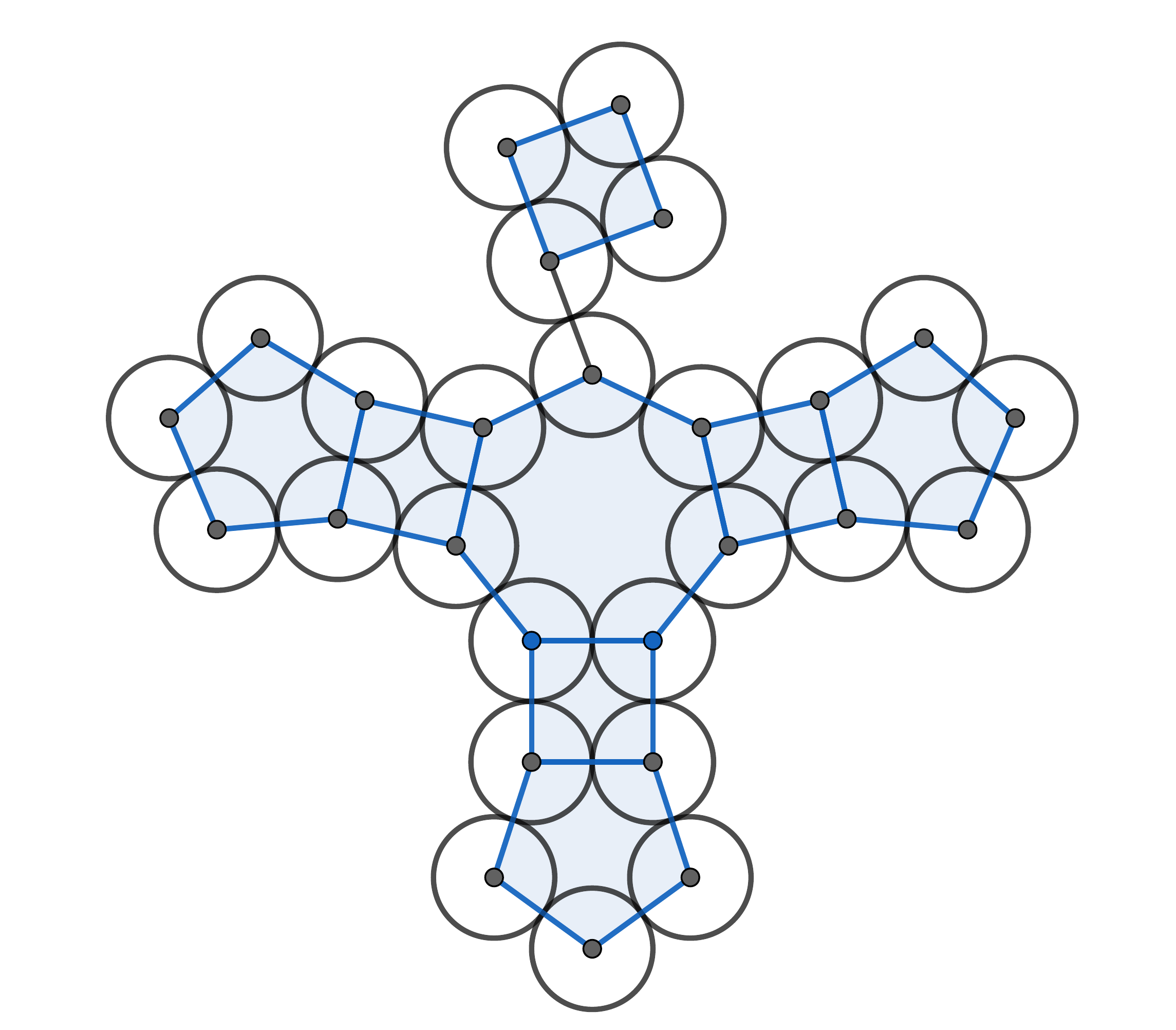}). 

\begin{figure}[ht]
\begin{center}
\includegraphics[width=0.6\textwidth]{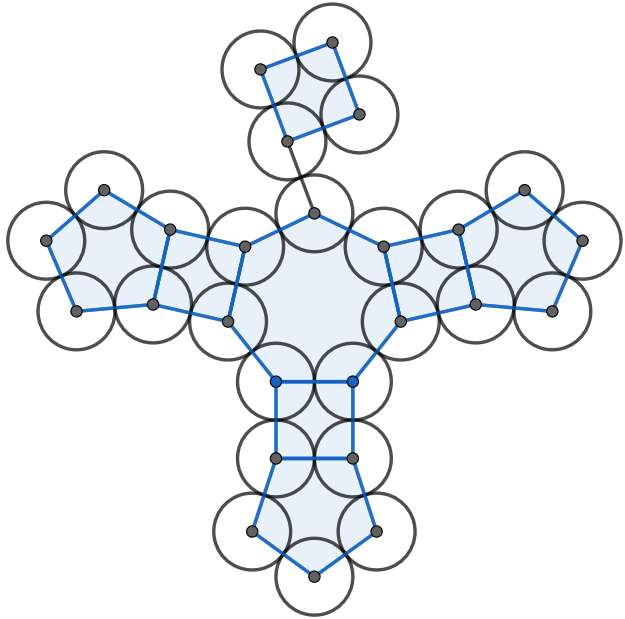}
\caption{An LS-packing of unit disks which is not a TS-packing.}
\label{Figure0.pdf} 
\end{center}
\end{figure}

In fact, the family of LS-packings of unit balls is a rather large and complex family including also the family of the so-called {\it $\rho$-separable} packings of unit balls introduced and investigated in \cite{BeLa}. Let $\delta_d^{LS}$ (resp., $\delta_d^{TS}$) denote the supremum of the upper densities of LS-packings (rep., TS-packings) of unit balls in $\Ee^d$. It is easy to see that $\delta_d^{LS}=\delta_d$. On the other hand, we know only the following exact values of $\delta_d^{TS}$:  $\delta_2^{TS}=\frac{\pi}{4}$ (\cite{FeFe}) and $\delta_3^{TS}=\frac{\pi}{6}$ (\cite{Ke}). Here, it is natural to expect that Kert\'esz's theorem stated and proved in \cite{Ke} extends to higher dimensions, which leads us to the following difficult looking problem.

\begin{conjecture}\label{Bezdek-1}
If a $d$-dimensional cube ${\bf Q}$ contains a TS-packing of $N$ unit balls in $\Ee^d$, $d\geq 4$, then ${\rm vol}_d({\bf Q})\geq N2^d$ implying that $\delta_d^{TS}=2^{-d}\omega_d$, where $\omega_d:={\rm vol}_d\left( \B^d[\oo,1]\right)$ and $2^{-d}\omega_d$ is the density of the TS-packing of unit diameter balls centered at the points of the integer lattice $\Ze^d$ in $\Ee^d$.
\end{conjecture}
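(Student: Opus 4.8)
\medskip
\noindent The natural plan is to try to extend Kert\'esz's proof of the $d=3$ case \cite{Ke}, and the first move is to pass from the cube formulation to a scale-invariant, local statement. Given a TS-packing $\mathcal{P}=\{\B^d[\ccc_i,1]:1\le i\le N\}$ contained in the cube $\mathbf{Q}$, let $V_i$ be the Dirichlet--Voronoi cell of $\B^d[\ccc_i,1]$ in $\mathcal{P}$; the $V_i$ are convex polytopes with pairwise disjoint interiors, each $V_i$ contains $\B^d[\ccc_i,1]$, and $\sum_i {\rm vol}_d(V_i\cap\mathbf{Q})={\rm vol}_d(\mathbf{Q})$. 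Thus Conjecture~\ref{Bezdek-1} --- and, more generally, the identity $\delta_d^{TS}=2^{-d}\omega_d$ --- would follow, modulo a routine treatment of the finitely many cells meeting $\bd\mathbf{Q}$ (those get clipped only in directions in which they already extend well beyond $\B^d[\ccc_i,1]\subseteq\mathbf{Q}$, so their clipped volumes stay $\ge 2^d$), from the claim that is really at stake: \emph{in every TS-packing of unit balls in $\Ee^d$, every Voronoi cell has volume at least $2^d$.} This bound is sharp, with equality for the integer-lattice packing of unit balls centered at $2\Ze^d$, whose Voronoi cells are translates of $[-1,1]^d$.

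For this claim the total separability must be used in a global way, since a convex polytope that merely contains $\B^d[\oo,1]$ can have volume as small as $\omega_d<2^d$. The structural input I would develop is the rigidity forced by tangencies. If $\B^d[\ccc_j,1]$ touches $\B^d[\ccc_i,1]$, then the \emph{only} hyperplane separating them is their common tangent hyperplane at the point of contact, namely $H_{ij}=\{\x:\langle\x-\ccc_i,\uu_{ij}\rangle=1\}$ with $\uu_{ij}=\tfrac12(\ccc_j-\ccc_i)$ a unit vector; total separability then forces every other center $\ccc_k$ to satisfy $\langle\ccc_k-\ccc_i,\uu_{ij}\rangle\le 0$ or $\ge 2$. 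In particular (applying this with $\ccc_k=\ccc_{j'}$ for a second touching neighbour) the directions $\uu_{ij}$ of the touching neighbours of $\B^d[\ccc_i,1]$ are pairwise non-acute, so there are at most $2d$ of them --- recovering the bound of \cite{BeSzSz} --- and in the extremal case they form $\{\pm\uu_1,\dots,\pm\uu_d\}$ for some orthonormal frame. One should then propagate these slab conditions, together with the nonoverlap conditions among the neighbours of $\B^d[\ccc_i,1]$, to pin down the facet hyperplanes of $V_i$: when there are $2d$ touching neighbours this already forces $V_i$ to be a rotated copy of $[-1,1]^d$; in general the problem should reduce to a constrained volume inequality, namely that among convex polytopes containing $\B^d[\oo,1]$ whose facet normals and facet distances satisfy the constraints imposed by a TS-packing, the minimum volume is $2^d$, attained only by the rotates of $[-1,1]^d$.

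I expect the extremal and stability analysis behind this last inequality to be the main obstacle, and it is precisely where Kert\'esz's argument for $d=3$ does not obviously carry over: when $d\ge 4$ there is far more room for ``corner-cutting'' neighbours placed near the diagonal directions of a near-cubical cell, and ruling out that such configurations shave the volume below $2^d$ requires a delicate, dimension-sensitive interplay between the combinatorial separation constraints and the metric volume bound. One is also tempted to bypass the polytope analysis by inducting on the dimension via axis-parallel slicing --- a hyperplane section of a TS-packing of balls is again a TS-packing of balls (of varying radii) in the corresponding box section --- but even granting the natural $(d-1)$-dimensional variable-radius analogue, Fubini only gives ${\rm vol}_d(\mathbf{Q})\ge\left(\int_{-1}^{1}(1-s^2)^{(d-1)/2}\,ds\right)2^{d-1}N$, which is strictly less than $2^dN$ because $\int_{-1}^{1}(1-s^2)^{(d-1)/2}\,ds<2$; recovering the lost factor --- by exploiting that each ball is ``thick'' in all $d$ coordinate directions simultaneously rather than slice by slice --- seems to require essentially the same structural input, and is, I believe, the real crux.
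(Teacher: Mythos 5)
The statement you are addressing is Conjecture~\ref{Bezdek-1}, which the paper does not prove: it is stated explicitly as an open, ``difficult looking problem'' (known only for $d=2$ by \cite{FeFe} and $d=3$ by \cite{Ke}), so there is no proof in the paper to compare against, and your text, by its own admission, is a strategy sketch rather than a proof. The concrete gap is the central claim you isolate, namely that in every TS-packing of unit balls in $\Ee^d$ every Voronoi cell has volume at least $2^d$ (or, in the cube formulation, that one can assign to each ball a region of volume at least $2^d$): nothing in your argument establishes this, and you concede that the ``constrained volume inequality'' is the real obstacle. Moreover, the structural input you do develop --- rigidity of the separating hyperplane at a tangency, hence pairwise non-acute touching directions and at most $2d$ tangent neighbours --- only recovers the known result of \cite{BeSzSz} and only constrains \emph{tangent} neighbours. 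A ball in a TS-packing may have no tangencies at all, and the facets of its Voronoi cell are typically generated by nearby but non-tangent balls, for which the separating hyperplane is not forced to be a common tangent hyperplane and your slab conditions give no information; this is exactly the regime (oblique, ``corner-cutting'' near-neighbours) where the volume could conceivably drop below $2^d$ and where a proof must do real work. Two further steps are also glossed: the treatment of cells clipped by $\bd\,{\bf Q}$ is asserted to be ``routine'' but in fact needs an argument of the same nature relative to the facets of ${\bf Q}$ (this is a nontrivial part of Kert\'esz's $d=3$ proof), and the claim that $2d$ tangent neighbours force the cell to be a rotate of $[-1,1]^d$ is stated without proof. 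In short, the reduction you propose is reasonable and consistent with how one would expect the conjecture to be attacked, but the conjecture remains unproven by it; the missing per-cell (or per-region) volume bound under the total-separability constraints is not a technical remainder but the entire content of the conjecture.
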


If Conjecture~\ref{Bezdek-1} holds, then combining it with $\delta_d\geq\Omega(d2^{-d})$ (\cite{JeJoPe}) and $\lim_{d\to\infty}\omega_d=0$, yields that $\delta_d^{TS}<\delta_d(=\delta_d^{LS})$ holds for any sufficiently large $d$. In other words, Conjecture~\ref{Bezdek-1} implies that LS-packings of unit balls behave very differently from TS-packings of unit balls in terms of density. Next, let $c_{TS}(n, d)$ (resp., $c_{LS}(n, d)$) denote the largest contact number of TS-packings (resp., LS-packings) of $n$ unit balls in $\Ee^d$. We expect that TS-packings and LS-packings behave in a similar way from the point of view of the contact number problem, which leads us to the following natural
question.

\begin{conjecture}\label{Bezdek-2}
$c_{TS}(n, d)=c_{LS}(n, d)$ for all $n>1$ and $d>1$.
\end{conjecture}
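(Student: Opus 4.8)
\medskip\noindent\textit{A plan of attack.} Since every TS-packing is an LS-packing, $c_{TS}(n,d)\le c_{LS}(n,d)$ is immediate, so the whole content of the conjecture is the reverse bound $c_{LS}(n,d)\le c_{TS}(n,d)$: the contact number of any LS-packing of $n$ unit balls should already be attained by some TS-packing of $n$ unit balls. I would attack this along two complementary lines — one settling low dimensions together with the asymptotics, the other aiming at exact equality in every dimension.

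The first line is a sandwiching argument. I would invoke the upper bound for $c_{LS}(n,d)$ proved in this paper, say $c_{LS}(n,d)\le f(n,d)$; since TS-packings form a subfamily of LS-packings, the same $f(n,d)$ also bounds $c_{TS}(n,d)$, so $c_{TS}(n,d)\le c_{LS}(n,d)\le f(n,d)$, and it suffices to exhibit TS-packings of $n$ unit balls realizing (nearly) $f(n,d)$ contacts. In the plane this is exactly what the crystallization theorem of the present paper does: it identifies all LS-packings of $n$ unit disks with maximum contact number, and since these optimizers are near-square pieces of the integer lattice — which are totally separable — one obtains $c_{LS}(n,2)=c_{TS}(n,2)$, so the conjecture holds for $d=2$. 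For $d\ge 3$ both $f(n,d)$ and the contact number of a near-cubic block of $2\Ze^d$ (a TS-packing) have leading term $dn$, whence $c_{LS}(n,d)=(1+o(1))\,c_{TS}(n,d)$; but the constants in the lower-order terms are not known to agree, so this argument falls short of equality for individual $n$.

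For exact equality in a fixed dimension $d\ge 3$ I would aim at a higher-dimensional crystallization statement, built on two structural facts valid for any LS-packing $\mathcal P$. First, $G_c(\mathcal P)$ is triangle-free: three pairwise tangent unit balls are never totally separable, because the only hyperplane separating two of them while disjoint from both their interiors is their common tangent hyperplane, and that hyperplane passes through the center of the third ball. Second, every vertex of $G_c(\mathcal P)$ has degree at most $2d$, since the star of a ball (the ball together with the balls tangent to it) is a TS-packing and the TS kissing number equals $2d$ by \cite{BeSzSz}. Starting from these, the goal would be to show that an LS-packing of maximum contact number has a contact graph that embeds as a subgraph of the grid graph of $\Ze^d$ — concretely, that its edges split into $d$ globally consistent ``axis classes'' — and then to realize that graph by the corresponding piece of $2\Ze^d$, which is visibly a TS-packing with the same number of contacts. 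The local ingredient would again be an isoperimetric/Voronoi estimate of the kind behind $f(n,d)$: each deviation from the lattice pattern around a ball should be chargeable to at least one contact lost elsewhere.

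The step I expect to be the main obstacle is precisely this global rigidity in dimension $d\ge 3$. LS-packings are genuinely more flexible than TS-packings — locally-lattice patches can be glued along thin interfaces with mutually incompatible orientations (compare Figure~\ref{Figure0.pdf}), so there is no a priori global coordinate system; indeed LS contact graphs need not even be bipartite, since the five unit disks centered at the vertices of a regular pentagon of side $2$ form an LS-packing whose contact graph is a $5$-cycle, which is not a subgraph of any grid. One must therefore show that such defects are incompatible with \emph{maximality} of the contact number, and a further honest difficulty is that the exact value of $c_{TS}(n,d)$ is itself unknown for $d\ge 3$. Accordingly I would expect this programme to yield the conjecture for $d=2$ and an asymptotically sharp version in every dimension, with the exact statement for $d\ge 3$ remaining contingent on progress on the contact number problem for lattice-like packings.
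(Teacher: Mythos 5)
Keep in mind what the paper actually establishes: the statement you were given is Conjecture~\ref{Bezdek-2}, and the paper contains no proof of it for $d\geq 3$; it proves only the planar case (Corollary~\ref{2D-corollary-1}, via Theorem~\ref{2D-theorem} combined with $c_{TS}(n,2)=\lfloor 2n-2\sqrt{n}\rfloor$ and the lattice polyomino constructions of Remark~\ref{lattice-LS-packing}), and offers Theorem~\ref{Main-theorem} as partial progress in higher dimensions. Measured against that, your proposal is sound and follows essentially the same route on everything that is actually provable: the inclusion TS $\subset$ LS gives $c_{TS}(n,d)\leq c_{LS}(n,d)$; the sandwich ``LS upper bound plus a TS construction attaining it'' gives equality for $d=2$; and the $\Ze^d$-type constructions together with the $dn-O(n^{(d-1)/d})$ upper bound give only asymptotic agreement for $d\geq 3$, exactly the paper's state of knowledge. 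Your auxiliary claims check out: LS contact graphs are triangle-free (the common tangent hyperplane of two tangent unit balls passes through the center of any third ball tangent to both), the degree bound $2d$ is the paper's inequality (\ref{Davenport-Hajos}), and the regular pentagon of side $2$ does give an LS-packing whose contact graph is a $5$-cycle, hence not bipartite.

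Two smaller points. First, attributing the $d=2$ equality to the crystallization theorem is slightly off: Theorem~\ref{2D-crystallization} is not needed for $c_{LS}(n,2)=c_{TS}(n,2)$, and its optimizers are \emph{not} all totally separable (see the second packing in Figure~\ref{Figure2.pdf}, and the pentagon itself for $n=5$, which attains $\lfloor 2\cdot 5-2\sqrt{5}\rfloor=5$ contacts); the equality needs only the LS upper bound of Theorem~\ref{2D-theorem} together with \emph{one} TS extremal family, e.g.\ the basic polyominoes. This also sharpens your own caveat: even in the plane, maximality of the contact number does not force the contact graph into the grid, so any $d\geq 3$ crystallization statement in your programme would have to allow exceptional non-lattice faces, as case (ii) of Theorem~\ref{2D-crystallization} does. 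Second, as you candidly note, the $d\geq 3$ part of your plan is a research programme, not a proof; that is not a defect relative to the paper, which leaves the conjecture open as well, but it should be stated as such rather than as a claimed argument.
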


Corollary~\ref{2D-corollary-1} proves Conjecture~\ref{Bezdek-2} for all $n>1$ and $d=2$. Furthermore, in this note we make the first steps towards investigating Conjecture~\ref{Bezdek-2} in higher dimensions by extending and improving earlier contact number estimates on TS-packings to LS-packings in Theorem~\ref{Main-theorem}. The details are as follows.

In order to state our first main result in a precise form we need the following notation. Let $\mathcal {P}:=\{\B^d[\ccc_i,1]| i\in I\}$ be an arbitrary (finite or infinite) packing of unit balls in $\Ee^d$, $d\geq 3$ and let $\mathbf{V}_i:=\{\x\in \Ee^d | \ \|\x-\ccc_i\|\leq \|\x-\ccc_j\|\ {\rm for \ all}\ j\neq i, j\in I\}$ denote the Voronoi cell assigned to $\B^d[\ccc_i,1]$ for $i\in I$. Recall (\cite{Rog}) that the Voronoi cells $\{\mathbf{V}_i | i\in I\}$ form a face-to-face tiling of $\Ee^d$. Then let the largest density of the unit ball $\B^d[\ccc_i,1]$ in its truncated Voronoi cell $\mathbf{V}_i\cap \B^d[\ccc_i,\sqrt{d}]$ be denoted by $\hat{\delta}_d$, i.e., let
$\hat{\delta}_d:=\sup_{\mathcal {P}}\left( \sup_{i\in I}  \frac{\omega_d}{{\rm vol}_d\left(\mathbf{V}_i\cap \B^d[\ccc_i,\sqrt{d}]\right)} \right)$,
where $\mathcal {P}$ runs through all possible unit ball packings of $\Ee^d$. We are now ready to state our first main result.

\begin{theorem}\label{Main-theorem} Let $\mathcal {P}$ be an arbitrary LS-packing of $n>1$ unit balls in $\Ee^d$, $d\geq 3$. Then
\begin{equation}\label{Main-1}
c(\mathcal {P})\leq \left\lfloor dn-\left(d^{-\frac{d-3}{2}}\hat{\delta}_d^{-\frac{d-1}{d}}\right)n^{\frac{d-1}{d}} \right\rfloor .
\end{equation}

\end{theorem}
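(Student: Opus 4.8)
The plan is to couple a local separation fact, which controls the directions of the balls touching a fixed one, with a surface-area estimate for the slightly enlarged packing, and then to feed in the isoperimetric inequality together with the density constant $\hat\delta_d$. First I would record the local structure. Fix $i$, let $N(i):=\{j:\|\ccc_i-\ccc_j\|=2\}$ index the balls touching $\B^d[\ccc_i,1]$, and put $\uu_j:=\tfrac12(\ccc_j-\ccc_i)$, a unit vector, for $j\in N(i)$. Since $\mathcal P$ is an LS-packing, $\B^d[\ccc_i,1]$ together with the balls $\B^d[\ccc_j,1]$, $j\in N(i)$, is a TS-packing, so for distinct $j,k\in N(i)$ some hyperplane separates $\B^d[\ccc_j,1]$ from $\B^d[\ccc_k,1]$ while avoiding $\mathrm{int}\,\B^d[\ccc_i,1]$; a one-line Cauchy--Schwarz computation shows that such a hyperplane must be tangent to $\B^d[\ccc_i,1]$ with inner unit normal $\uu_j$ or $\uu_k$, which forces $\iprod{\uu_j}{\uu_k}\le 0$. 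Hence the vectors $\{\uu_j:j\in N(i)\}$ have pairwise nonpositive inner products; since any such family in $\Ed$ has at most $2d$ members — and, when it has exactly $2d$, it is an orthonormal basis of $\Ed$ together with its negatives (an antipodal pair is forced, then one inducts on $d$) — we get $\deg(i)=|N(i)|\le 2d$, extending the contact-degree bound of \cite{BeSzSz} to LS-packings. In particular $2c(\mathcal P)=\sum_i\deg(i)\le 2dn$, and the theorem is equivalent to the lower bound $\sum_i\big(2d-\deg(i)\big)\ge 2\,d^{-(d-3)/2}\hat\delta_d^{-(d-1)/d}n^{(d-1)/d}$.

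Second, set $\mathcal U:=\bigcup_i\B^d[\ccc_i,1]$. From $\dist(x,\mathcal U)=\big(\dist(x,\{\ccc_k\})-1\big)_+$ one sees that the outer parallel body $\mathcal U^{(\sqrt d-1)}:=\{x\in\Ed:\dist(x,\mathcal U)\le\sqrt d-1\}$ is exactly $\bigcup_i\B^d[\ccc_i,\sqrt d]$, and that every point of it lies in $\mathbf V_i\cap\B^d[\ccc_i,\sqrt d]$ for $i$ a nearest center; thus the truncated Voronoi cells $\mathbf V_i\cap\B^d[\ccc_i,\sqrt d]$ cover $\mathcal U^{(\sqrt d-1)}$, and, having pairwise disjoint interiors, they yield — by the very definition of $\hat\delta_d$ —
\[
{\rm vol}_d\big(\mathcal U^{(\sqrt d-1)}\big)=\sum_i{\rm vol}_d\big(\mathbf V_i\cap\B^d[\ccc_i,\sqrt d]\big)\ \ge\ n\,\omega_d\,\hat\delta_d^{-1}.
\]
Since $\mathcal U^{(\sqrt d-1)}$ is a finite union of congruent balls it is a piecewise-smooth compact body, so the isoperimetric inequality in $\Ed$ applies and, with the above volume bound, gives
\[
\mathcal H^{d-1}\big(\partial\,\mathcal U^{(\sqrt d-1)}\big)\ \ge\ d\,\omega_d^{1/d}\,{\rm vol}_d\big(\mathcal U^{(\sqrt d-1)}\big)^{(d-1)/d}\ \ge\ d\,\omega_d\,\hat\delta_d^{-(d-1)/d}\,n^{(d-1)/d}.
\]

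Third, I would estimate $\mathcal H^{d-1}\big(\partial\,\mathcal U^{(\sqrt d-1)}\big)$ from above ball by ball. The spherical pieces $S_i:=\partial\B^d[\ccc_i,\sqrt d]\setminus\bigcup_{k\ne i}\mathrm{int}\,\B^d[\ccc_k,\sqrt d]$ cover $\partial\,\mathcal U^{(\sqrt d-1)}$. For $j\in N(i)$, using $\ccc_j=\ccc_i+2\uu_j$ one checks that $\{y:\|y-\ccc_i\|=\sqrt d,\ \|y-\ccc_j\|<\sqrt d\}$ equals the spherical cap $\{y:\|y-\ccc_i\|=\sqrt d,\ \iprod{y-\ccc_i}{\uu_j}>1\}$ of $\partial\B^d[\ccc_i,\sqrt d]$, of angular radius $\arccos(1/\sqrt d)$ about the direction $\uu_j$; hence $S_i$ lies in the complement of the union of these $\deg(i)$ caps. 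The crucial ingredient is a spherical covering estimate: because the centers $\uu_j$, $j\in N(i)$, have pairwise nonpositive inner products, the corresponding $\deg(i)$ caps cover at least a $\tfrac{\deg(i)}{2d}$ fraction of $\partial\B^d[\ccc_i,\sqrt d]$ — the endpoint being that $2d$ such caps (around an orthonormal basis and its negatives) cover the whole sphere, since $\max_k|v_k|\ge 1/\sqrt d$ for every unit vector $v$; this is precisely why the truncation radius $\sqrt d$ is the right one. Granting this,
\[
\mathcal H^{d-1}(S_i)\ \le\ \frac{2d-\deg(i)}{2d}\,\mathcal H^{d-1}\big(\partial\B^d[\ccc_i,\sqrt d]\big)\ =\ \frac{2d-\deg(i)}{2d}\,d\,\omega_d\,d^{(d-1)/2},
\]
and summing over $i$ and comparing with the isoperimetric lower bound yields
\[
d\,\omega_d\,\hat\delta_d^{-(d-1)/d}\,n^{(d-1)/d}\ \le\ \sum_i\mathcal H^{d-1}(S_i)\ \le\ \omega_d\,d^{(d-1)/2}\big(dn-c(\mathcal P)\big),
\]
i.e.\ $dn-c(\mathcal P)\ge d^{-(d-3)/2}\hat\delta_d^{-(d-1)/d}n^{(d-1)/d}$. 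Since $c(\mathcal P)\in\Zz$, the right-hand side may be replaced by its floor, which is precisely (\ref{Main-1}).

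The step I expect to be the main obstacle is the spherical covering estimate above: the degree-proportional lower bound for the measure of the union of the Dirichlet caps around $\deg(i)$ unit vectors with pairwise nonpositive inner products. Its endpoint case is immediate, but for intermediate degrees the caps genuinely overlap once $d\ge 3$, so one must exploit the obtuseness of the centers to ensure that the overlaps never pull the covered fraction below $\tfrac{\deg(i)}{2d}$. Everything else is routine: the separation argument is the indicated Cauchy--Schwarz computation, the identifications of $\mathcal U^{(\sqrt d-1)}$ with $\bigcup_i\B^d[\ccc_i,\sqrt d]$ and with the union of truncated Voronoi cells are elementary, and the isoperimetric input is classical.
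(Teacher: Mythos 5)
Your proposal reproduces the architecture of the paper's proof: the obtuseness of the touching directions $\uu_{ij}$ and the bound $\deg(i)\le 2d$, the volume lower bound ${\rm vol}_d\left(\bigcup_i \B^d[\ccc_i,\sqrt d]\right)\geq n\omega_d\hat\delta_d^{-1}$ via truncated Voronoi cells, the isoperimetric lower bound on the surface volume, the identification of $S^{d-1}(\ccc_i,\sqrt d)\cap\B^d[\ccc_j,\sqrt d]$ with a cap of angular radius $\arccos\frac{1}{\sqrt d}$, and the final bookkeeping leading to $dn-c(\mathcal P)\geq d^{-\frac{d-3}{2}}\hat\delta_d^{-\frac{d-1}{d}}n^{\frac{d-1}{d}}$ are all present in the paper and your versions of these steps are correct (your tangency/Cauchy--Schwarz argument for $\iprod{\uu_j}{\uu_k}\le 0$ is fine and is what the paper leaves as ``easy to show'').

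However, the step you yourself flag as the main obstacle is a genuine gap, and it is exactly the heart of the proof. Your claim that the $\deg(i)$ caps of angular radius $\arccos\frac{1}{\sqrt d}$ centered at pairwise obtuse directions cover at least a $\frac{\deg(i)}{2d}$ fraction of the sphere is equivalent to the paper's Lemma~\ref{spherical-density-bound}, and it is not elementary: for $1<\deg(i)<2d$ and $d\ge 3$ the caps overlap, and what one really needs is that each cap, intersected with its spherical Dirichlet--Voronoi cell, still occupies at least $\frac{1}{2d}$ of the sphere -- i.e., that the cross-polytope configuration $\{\pm e_1,\dots,\pm e_d\}$ is extremal. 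The paper obtains this from B\"or\"oczky's theorem \cite{Bo} on the density of a packing of spherical caps of angular radius $\frac{\pi}{4}$ in their Voronoi cells truncated at $\arccos\frac{1}{\sqrt d}$ (the circumradius of the regular spherical simplex of edge length $\frac{\pi}{2}$), together with the identity (\ref{Boroczky-density-bound}), $\sigma_{\Sedm}(\frac{\pi}{2})=\frac{2}{\omega_d}V_{\Sedm}\left(C_{\Sedm}\left[\x,\frac{\pi}{4}\right]\right)$, which comes from the tiling of $\Sedm$ by $2^d$ orthant simplices; chaining (\ref{inequality-7})--(\ref{Boroczky-upper-bound-theorem}) then yields your covering fraction. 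Your endpoint observation ($2d$ caps around an orthonormal frame and its negatives cover $\Sedm$) gives the case $\deg(i)\in\{1,2d\}$ but says nothing about intermediate degrees, and ``exploiting obtuseness'' without such a Rogers/B\"or\"oczky-type simplicial density bound is not a proof. So the proposal is a correct skeleton of the paper's argument with its central lemma asserted rather than proved; to complete it you should either prove the covering estimate or invoke B\"or\"oczky's truncated-cell bound as the paper does.
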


\begin{remark}\label{strengthening}
We note that $d^{-\frac{d-3}{2}}\hat{\delta}_d^{-\frac{d-1}{d}}>d^{-\frac{d-3}{2}}>\frac{1}{2}d^{-\frac{d-1}{2}}$ hold for all $d\geq 3$, and therefore (\ref{Main-1}) implies (resp., significantly improves) (\ref{BeSzSz-Main}) for all (resp., sufficiently large) $n>1$.
\end{remark}

\begin{remark}\label{Rogers}
Recall the following classical result of Rogers \cite{Ro} (which was rediscovered by Baranovskii \cite{Ba} and extended to
spherical and hyperbolic spaces by B\"or\"oczky \cite{Bo}): Let $\mathcal {P}:=\{\B^d[\ccc_i,1]| i\in I\}$ be an arbitrary packing of unit balls in $\Ee^d$, $d>1$ with $\mathbf{V}_i$ standing for the Voronoi cell assigned to $\B^d[\ccc_i,1]$ for $i\in I$. Furthermore, take a regular $d$-dimensional simplex of edge length $2$ in $\Ee^d$ and then draw a $d$-dimensional unit ball around each vertex of the simplex. Finally, let $\sigma_d$ denote the ratio of the volume of the portion of the simplex covered by balls to the volume of the simplex. Then $\frac{\omega_d}{{\rm vol}_d\left(\mathbf{V}_i\cap \B^d[\ccc_i,\sqrt{\frac{2d}{d+1}}]\right)}\leq \sigma_d$ holds for all $i\in I$ and therefore $\hat{\delta}_d\leq \sigma_d$ for all $d\geq 3$. The latter inequality and (\ref{Main-1}) yield that if $\mathcal {P}$ is an arbitrary LS-packing of $n>1$ unit balls in $\Ee^d$, $d\geq 3$, then
$c(\mathcal {P})\leq \left\lfloor dn-\left(d^{-\frac{d-3}{2}}\hat{\delta}_d^{-\frac{d-1}{d}}\right)n^{\frac{d-1}{d}} \right\rfloor \leq \left\lfloor dn-\left(d^{-\frac{d-3}{2}}{\sigma}_d^{-\frac{d-1}{d}}\right)n^{\frac{d-1}{d}} \right\rfloor$, where $\sigma_d\sim\frac{d}{e}2^{-\frac{1}{2}d}$ (\cite{Ro}).
\end{remark}

\begin{remark}\label{Bezdek}
We note that the density upper bound $\sigma_d$ of Rogers has been improved by the author \cite{Bez02} for dimensions $d\geq 8$ as follows: Using the notations of Remark~\ref{Rogers}, \cite{Bez02} shows that  $\frac{\omega_d}{{\rm vol}_d\left(\mathbf{V}_i\cap \B^d[\ccc_i,\sqrt{\frac{2d}{d+1}}]\right)}\leq \hat{\sigma}_d$ holds for all $i\in I$ and $d\geq 8$ and therefore $\hat{\delta}_d\leq \hat{\sigma}_d$ for all $d\geq 8$, where $\hat{\sigma}_d$ is a geometrically well-defined quantity satisfying the inequality $\hat{\sigma}_d<\sigma_d$ for all $d\geq 8$. This result and (\ref{Main-1}) yield that if $\mathcal {P}$ is an arbitrary LS-packing of $n>1$ unit balls in $\Ee^d$, $d\geq 8$, then
$
c(\mathcal {P})\leq \left\lfloor dn-\left(d^{-\frac{d-3}{2}}\hat{\delta}_d^{-\frac{d-1}{d}}\right)n^{\frac{d-1}{d}} \right\rfloor \leq \left\lfloor dn-\left(d^{-\frac{d-3}{2}}{\hat{\sigma}}_d^{-\frac{d-1}{d}}\right)n^{\frac{d-1}{d}} \right\rfloor 
$.
\end{remark}

\begin{remark}\label{Hales}
The density upper bound $\sigma_3$ of Rogers has been improved by Hales \cite{Ha} as follows:  If $\mathcal {P}:=\{\B^3[\ccc_i,1]| i\in I\}$ is an arbitrary packing of unit balls in $\Ee^3$ and $\mathbf{V}_i$ denotes the Voronoi cell assigned to $\B^d[\ccc_i,1]$, $i\in I$, then $\frac{\omega_3}{{\rm vol}_3\left(\mathbf{V}_i\cap \B^3[\ccc_i,\sqrt{2}]\right)}\leq \frac{\omega_3}{{\rm vol}_3(\mathbf{D})}< 0.7547<\sigma_3=0.7797...$, where $\mathbf{D}$ stands for a regular dodacahedron of inradius $1$. Hence, $\hat{\delta}_3< 0.7547$. The latter inequality and (\ref{Main-1}) yield that if $\mathcal {P}$ is an arbitrary LS-packing of $n>1$ unit balls in $\Ee^3$, then
$c(\mathcal {P})\leq \left\lfloor 3n-\hat{\delta}_3^{-\frac{2}{3}}n^{\frac{2}{3}} \right\rfloor\leq\left\lfloor  3n-1.206 n^{\frac{2}{3}}\right\rfloor$.
\end{remark}


\begin{remark}\label{lattice-LS-packing}
Let $\mathcal {P}:=\{\B^d[\ccc_i,\frac{1}{2}]| \ccc_i\in \Ze^d, 1\leq i\leq n\}$ be an arbitrary packing of $n$ unit diameter balls with centers having integer coordinates in $\Ee^d$. Clearly, $\mathcal {P}$ is a TS-packing. Then let $c_{ \Ze^d}(n)$ denote the largest $c(\mathcal {P})$ for packings $\mathcal {P}$ of $n$ unit diameter balls of  $\Ee^d$ obtained in this way. It is proved in \cite{BeSzSz} that 
\begin{equation}\label{integer-lattice}
dN^d-dN^{d-1}\leq c_{ \Ze^d}(n)\leq \left\lfloor dn-dn^{\frac{d-1}{d}}\right\rfloor
\end{equation}
for $N\in\Ze$ satisfying $0\leq N\leq n^{\frac{1}{d}}< N+1$, where $d>1$ and $n>1$. Note that if $N=n^{\frac{1}{d}}\in \Ze$, then the lower and upper estimates of (\ref{integer-lattice}) are equal to $c_{ \Ze^d}(n)$. Furthermore,
\begin{equation}\label{integer-lattice-2D} 
c_{ \Ze^2}(n)=\lfloor 2n-2\sqrt{n}\rfloor
\end{equation} 
for all $n>1$. We note that \cite{Ne} (resp., \cite{AlCe}) generates an algorithm that lists some (resp., all) packings  $\mathcal {P}=\{\B^d[\ccc_i,\frac{1}{2}]| \ccc_i\in \Ze^d, 1\leq i\leq n\}$ with $c(\mathcal {P})=c_{ \Ze^d}(n)$ for $d\ge 4$ (resp., $d=2, 3$) and $n>1$. 
\end{remark}

Our second main result is a solution of the contact number problem for LS-packings of congruent disks in $\Ee^2$. As in Remark~\ref{lattice-LS-packing}, it will be convenient for us to study LS-packings of unit diameter disks instead of unit disks (Figure~\ref{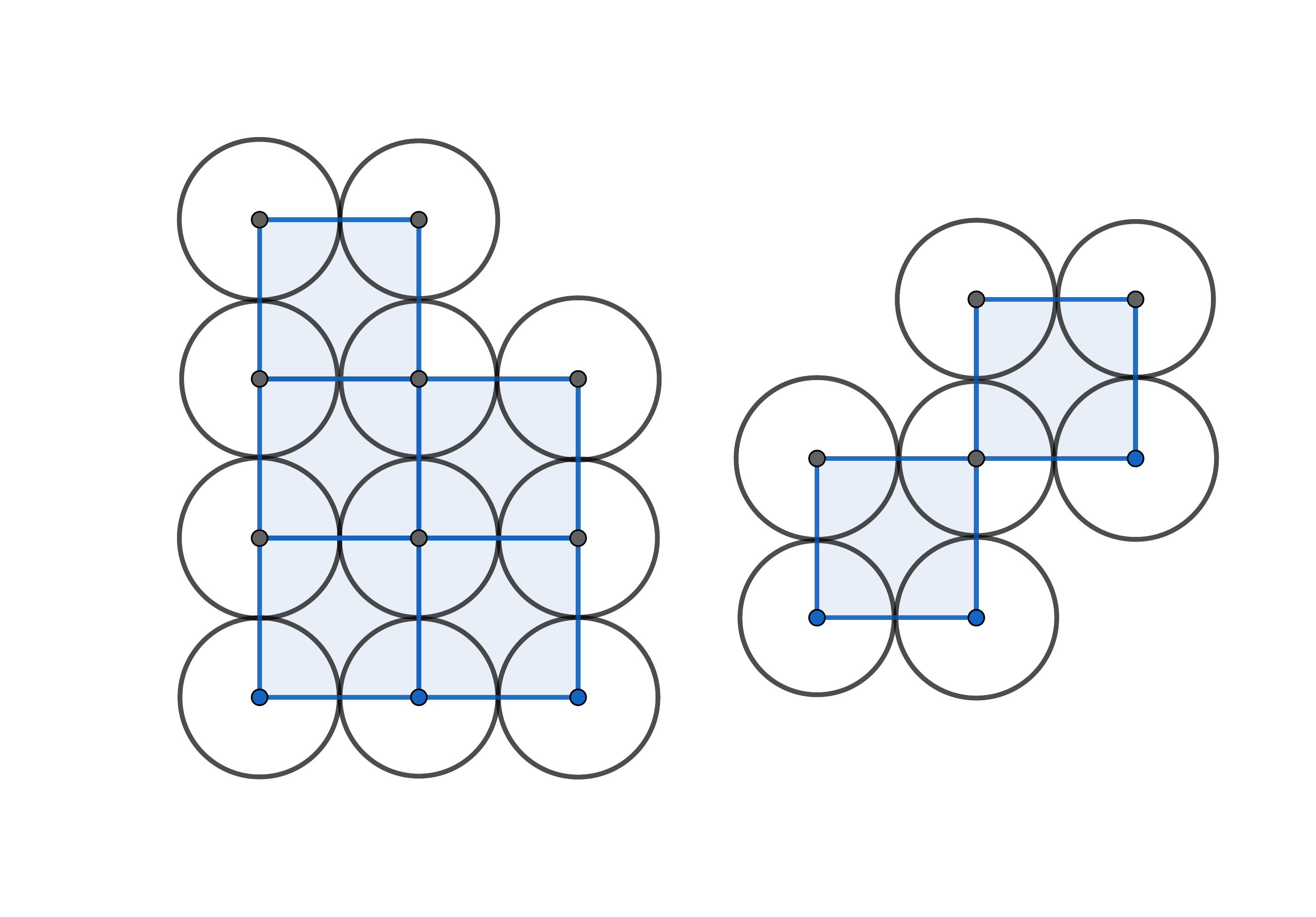}).

\begin{figure}[ht]
\begin{center}
\includegraphics[width=0.5\textwidth]{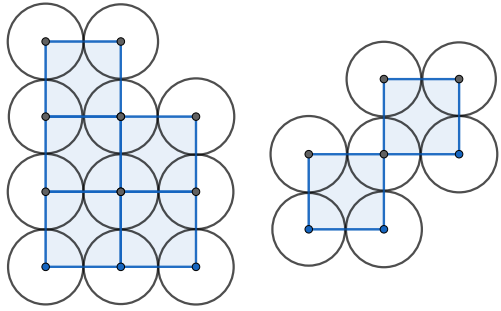}
\caption{An LS-packing of $11$ (resp., $7$) unit diameter disks with maximum contact number.}
\label{Figure1.pdf} 
\end{center}
\end{figure}

\begin{theorem}\label{2D-theorem}
Let $\mathcal {P}$ be an arbitrary LS-packing of $n>1$ unit diameter disks in $\Ee^2$. Then $c(\mathcal {P})\leq \lfloor 2n-2\sqrt{n}\rfloor$.
\end{theorem}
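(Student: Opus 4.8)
The plan is to combine a local separability lemma, Euler's formula, and a discrete isoperimetric inequality. Write $\ccc_1,\dots,\ccc_n$ for the disk centres (so $\|\ccc_i-\ccc_j\|\ge 1$ for $i\ne j$) and $c:=c(\mathcal P)$. \emph{Step 1 (local angle lemma).} I would first show that if $\B^2[\ccc_j,\frac12]$ and $\B^2[\ccc_k,\frac12]$ are both tangent to $\B^2[\ccc_i,\frac12]$, then $\angle\ccc_j\ccc_i\ccc_k\ge 90^\circ$. Indeed, the only line separating the tangent disks $\B^2[\ccc_i,\frac12]$ and $\B^2[\ccc_j,\frac12]$ and disjoint from both their interiors is their common tangent at $\frac12(\ccc_i+\ccc_j)$; since these two disks together with $\B^2[\ccc_k,\frac12]$ lie in the totally separable sub-packing formed by $\B^2[\ccc_i,\frac12]$ and all disks tangent to it, that common tangent must also avoid the interior of $\B^2[\ccc_k,\frac12]$, and evaluating the distance of $\ccc_k$ from the line forces $\langle\ccc_k-\ccc_i,\ccc_j-\ccc_i\rangle\le 0$. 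Consequently every vertex of $G_c(\mathcal P)$ has degree at most $4$ (four directions pairwise at least $90^\circ$ apart are mutually orthogonal), $G_c(\mathcal P)$ is triangle-free, and the four neighbours of a degree-$4$ vertex are the vertices of a unit square centred at it.

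\emph{Step 2 (planar embedding and Euler count).} The bound is additive over connected components since $\sqrt{n_1}+\sqrt{n_2}\ge\sqrt{n_1+n_2}$, so I may assume $G_c(\mathcal P)$ connected; drawn with straight segments between touching centres it is a plane graph, because a crossing point of two edges would lie in $\B^2[\ccc_a,\frac12]\cap\B^2[\ccc_c,\frac12]$ for two non-tangent, hence non-overlapping, disks. Let $b$ be the length of the boundary walk of the unbounded face and $F_j$ the number of bounded faces with exactly $j$ edges; Step 1 gives $F_3=0$. Double counting edge--face incidences, $2c=b+\sum_{j\ge4}jF_j$, and Euler's formula gives $\sum_{j\ge4}F_j=c-n+1$; eliminating $\sum_j jF_j$ yields the identity
\[
c=2n-2-\tfrac12\Delta,\qquad \Delta:=b+\textstyle\sum_{j\ge5}(j-4)F_j\ \ge\ 0.
\]
So it suffices to prove $\Delta\ge 4\sqrt n-4$: then $c\le 2n-2\sqrt n$, hence $c\le\lfloor 2n-2\sqrt n\rfloor$ as $c\in\Ze$.

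\emph{Step 3 (isoperimetric core).} The quantity $\Delta$ is a ``perimeter plus excess'' functional, minimised by the $k\times k$ square grid where $\Delta=b=4k-4=4\sqrt n-4$. To prove $\Delta\ge 4\sqrt n-4$ in general I would use the rigidity from Step 1: around every degree-$4$ vertex the four incident bounded faces are unit squares glued along one local grid, so the union $\Omega$ of the unit-square faces of $G_c(\mathcal P)$ behaves like a polyomino, for which the discrete isoperimetric inequality ``perimeter $\ge 4\sqrt{\#\mathrm{cells}}$'' holds; the faces with $j\ge5$ edges and the boundary edges of $G_c(\mathcal P)$ are accounted for precisely by the terms of $\Delta$, while the number of cells of $\Omega$ is bounded below in terms of $n$ and the same boundary data. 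Matching these estimates gives the desired inequality. In parallel I would try a shifting/compression argument transforming an extremal LS-packing into a sub-packing of $\Ze^2$ without losing contacts, reducing the claim to $c_{\Ze^2}(n)=\lfloor 2n-2\sqrt n\rfloor$ from (\ref{integer-lattice-2D}).

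Steps 1 and 2 are routine; the difficulty lies entirely in Step 3. The main obstacle is that $G_c(\mathcal P)$ need not consist only of square faces: pentagonal and larger faces genuinely occur---five unit diameter disks at the vertices of a unit regular pentagon form an LS-packing with $c=5=\lfloor 2\cdot5-2\sqrt5\rfloor$---and the local square grids around different degree-$4$ vertices need not be globally aligned, so $\Omega$ is not literally a grid-aligned polyomino and the charging of the $(j-4)F_j$ and boundary terms must be arranged to absorb exactly these discrepancies. Treating non-$2$-connected contact graphs (bridges and cut vertices) in Step 2 is a minor further point, handled by counting boundary walks with multiplicity.
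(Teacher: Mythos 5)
Your Steps 1 and 2 are sound and match the ingredients the paper also uses: local separability forces pairwise angular distance at least $\frac{\pi}{2}$ between contact directions, hence maximum degree $4$ and no triangular faces, and the Euler/edge--face count correctly reduces the theorem to the inequality $\Delta=b+\sum_{j\geq 5}(j-4)F_j\geq 4\sqrt{n}-4$. But Step 3, which you yourself identify as the entire difficulty, is not a proof: it is a statement of intent. The claim that the union $\Omega$ of the square faces ``behaves like a polyomino'' so that the polyomino isoperimetric inequality applies is precisely what breaks down in the situations you list (pentagonal and larger internal faces, and square faces whose local grids are not globally aligned, as in the second packing of Figure~\ref{Figure2.pdf}); you give no charging scheme that actually absorbs the $(j-4)F_j$ and boundary terms, and no lower bound on the number of cells of $\Omega$ in terms of $n$. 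The alternative ``shifting/compression'' reduction to $c_{\Ze^2}(n)$ is likewise only announced, and it is far from routine: compressing an LS-packing into $\Ze^2$ without destroying contacts is essentially a crystallization statement, i.e., it is at least as hard as Theorem~\ref{2D-crystallization}. So the core inequality $\Delta\geq 4\sqrt n-4$ remains unproved, and the proposal has a genuine gap.

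It is worth seeing how the paper avoids needing any such global isoperimetric statement. Following Harborth, it argues by induction on $n$: after disposing of cut vertices by splitting at them (note your reduction only treats disjoint components; a cut vertex gives $n+1=n_1+n_2$, which still works but must be checked separately), one peels off the cycle $P$ bounding the external face, with $v$ vertices. The angle condition at boundary vertices gives $v_2+2v_3+3v_4\leq 2v-4$, whence at most $2v-4$ edges are lost beyond those of $P$, so $c_{\rm LS}(n)\leq c_{\rm LS}(n-v)+2v-4$; the induction hypothesis then yields $c_{\rm LS}(n)\leq (2n-4)-2\sqrt{n-v}$, and the fact that every internal face has at least $4$ sides gives, via Euler, the lower bound $n-v\geq 2c_{\rm LS}(n)-3n+4$. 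Combining the two produces a quadratic inequality in $c_{\rm LS}(n)$ whose relevant root is $2n-2\sqrt n$. If you want to salvage your outline, the most direct route is to replace Step 3 by this boundary-peeling induction, which converts the isoperimetric content into the elementary estimate on $n-v$ rather than a metric statement about $\Omega$.
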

Recall (\cite{BeSzSz}) that $c_{TS}(n, 2)=\lfloor 2n-2\sqrt{n}\rfloor$ for all $n>1$. Clearly, this result, Remark~\ref{lattice-LS-packing}, and Theorem~\ref{2D-theorem} imply
\begin{corollary}\label{2D-corollary-1}
$c_{ \Ze^2}(n)=c_{TS}(n, 2)=c_{LS}(n, 2)=\lfloor 2n-2\sqrt{n}\rfloor$ for all $n>1$.
\end{corollary}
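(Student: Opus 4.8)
The plan is to pass to the contact graph $G:=G_c(\mathcal{P})$, drawn in the plane with each edge realised as the unit segment joining the centres of two tangent disks, and to combine an Euler-type face count with an isoperimetric estimate — exactly the strategy by which \cite{BeSzSz} proved $c_{TS}(n,2)=\lfloor 2n-2\sqrt n\rfloor$. The first task is to check that \emph{local} separability already delivers the structural facts that argument uses.

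\emph{Local structure.} Fix a disk $\B^2[\ccc_i,\tfrac12]$ of $\mathcal{P}$. Since it together with its tangent disks forms a TS-packing, the bound of \cite{BeSzSz} (at most $2d$ balls touching one in a TS-packing) gives $\deg_G(i)\le 4$. Moreover, if $\B^2[\ccc_j,\tfrac12]$ and $\B^2[\ccc_k,\tfrac12]$ are both tangent to $\B^2[\ccc_i,\tfrac12]$, then $\theta:=\angle\ccc_j\ccc_i\ccc_k\ge 90^\circ$: put $\ccc_i=\oo$, so $\|\ccc_j\|=\|\ccc_k\|=1$; a line separating these two disks and disjoint from the interiors of $\B^2[\oo,\tfrac12],\B^2[\ccc_j,\tfrac12],\B^2[\ccc_k,\tfrac12]$ has $\B^2[\oo,\tfrac12]$ on one of its closed sides, say the side containing $\B^2[\ccc_j,\tfrac12]$, hence it separates the stadium $\conv\!\big(\B^2[\oo,\tfrac12]\cup\B^2[\ccc_j,\tfrac12]\big)$ from $\B^2[\ccc_k,\tfrac12]$ with no interior met, forcing $\dist(\ccc_k,[\oo,\ccc_j])\ge 1$; but if $\theta<90^\circ$ the orthogonal projection of $\ccc_k$ onto the line $\oo\ccc_j$ lies on the segment $[\oo,\ccc_j]$ and $\dist(\ccc_k,[\oo,\ccc_j])=\sin\theta<1$, a contradiction. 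Two consequences: two neighbours of a disk are never tangent to each other (they would subtend $60^\circ$), so $G$ is triangle-free; and if $\ccc_i$ lies on the boundary of $X:=\conv\{\ccc_1,\dots,\ccc_n\}$ its neighbours lie in a closed halfplane through $\ccc_i$, pairwise $\ge 90^\circ$ apart, so $\deg_G(i)\le 3$ (and $\le 2$ at a vertex of $X$).

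\emph{Reductions and the Euler bound.} I would induct on $n$. If $G$ is disconnected or has a cut vertex, it decomposes into pieces on $n_1,n_2,\dots$ vertices (a vertex being shared across a cut), and the elementary inequality $\sqrt{n_1}+\sqrt{n_2}\ge 1+\sqrt{n_1+n_2-1}$ — equivalently $(n_1-1)(n_2-1)\ge 0$ — shows that the bound for the pieces implies it for $\mathcal{P}$; isolated and pendant vertices are handled trivially (for $n\ge2$, $2\sqrt n-2\sqrt{n-1}\le1$). So assume $G$ is $2$-connected and $n\ge 3$. Then in the induced plane embedding every face is bounded by a cycle, and by triangle-freeness every bounded face has length $\ge 4$; writing $e=c(\mathcal{P})$, $f$ for the number of faces and $b$ for the length of the outer face, $2e=\sum_{\mathrm{faces}}(\mathrm{length})\ge 4(f-1)+b$, so Euler's formula $n-e+f=2$ yields
\[
e\;\le\;2n-2-\tfrac{b}{2}.
\]

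\emph{The isoperimetric step and conclusion.} It remains to prove $b\ge 4\sqrt n-4$, i.e.\ $n\le\big(\tfrac{b+4}{4}\big)^{2}$. Here $b$ is the length of the outer cycle $C$, a simple closed polygon whose $b$ edges have length $1$ and whose interior angles are all $\ge 90^\circ$ by the angle bound above, and all $n$ centres lie in the region it bounds (whose convex hull is $X$, of perimeter $\le b$). I would prove this as in \cite{BeSzSz}: the $90^\circ$ separation forces $\mathcal{P}$ to behave, along its boundary, like a piece of the integer lattice — consecutive boundary contacts turn by at most $90^\circ$ and corners of $X$ carry degree $\le 2$ — which lets one either peel off $C$ and verify that the next outer cycle is shorter by at least $8$ (tight for the $m\times m$ square lattice, where $b=4m-4$ drops to $4m-12$) and induct, or run a convex-hull perimeter estimate directly; the identity $\sum_i\big(4-\deg_G(i)\big)=4n-2e\ge b+4$ coming from the Euler bound keeps the accounting consistent. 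Granting $b\ge 4\sqrt n-4$, the Euler bound gives $e\le 2n-2-\tfrac12(4\sqrt n-4)=2n-2\sqrt n$, and since $e$ is an integer, $c(\mathcal{P})=e\le\lfloor 2n-2\sqrt n\rfloor$. \textbf{The main obstacle is this last step:} a direct area comparison is too weak, because an LS-packing can be far denser than $\pi/4$ in its interior (in fact $\delta_2^{LS}=\delta_2=\pi/\sqrt{12}$), so one genuinely needs the $90^\circ$ angular separation to pin down the square-lattice-type boundary behaviour that produces the sharp constant.
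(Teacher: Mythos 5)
Your proposal only addresses the new content of the corollary (the upper bound $c_{LS}(n,2)\leq\lfloor 2n-2\sqrt n\rfloor$, i.e.\ Theorem~\ref{2D-theorem}; the equalities then follow from $c_{\Ze^2}(n)\leq c_{TS}(n,2)\leq c_{LS}(n,2)$ and the known value $c_{\Ze^2}(n)=c_{TS}(n,2)=\lfloor 2n-2\sqrt n\rfloor$ from \cite{BeSzSz}, which you implicitly use), and in that upper bound there is a genuine gap exactly where you flag it. Your preparatory steps are fine and coincide with the paper's: the $\frac{\pi}{2}$ angular separation of neighbours (hence degree $\leq 4$, no triangular faces, internal angles of the outer polygon $\geq\frac{\pi}{2}$), the reduction to the $2$-connected case via $(n_1-1)(n_2-1)\geq 0$, and the Euler count $c(\mathcal P)\leq 2n-2-\frac{v}{2}$ (your $e\leq 2n-2-\frac{b}{2}$ is literally inequality (\ref{inequality-20}) rewritten). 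But the entire quantitative content then rests on the unproved claim $b\geq 4\sqrt n-4$ for the outer cycle of an arbitrary $2$-connected LS contact graph, and neither of your sketched routes establishes it: after deleting the outer cycle the remaining graph need not be $2$-connected or even connected, so there is no well-defined ``next outer cycle'' to compare with (the ``shorter by $8$'' step is only verified on the square lattice); and the bounded region is not a polyomino in general, since LS-packings admit internal faces that are pentagons or longer and need not sit on a lattice (see the second packing in Figure~\ref{Figure2.pdf}), so a convex-hull perimeter estimate is not known to yield the sharp constant. As written, the proof stops at precisely the step that carries the theorem.

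It is worth noting how the paper circumvents the need for any such direct boundary-length estimate. Instead of lower-bounding $v$ by a function of $n$, it deletes the $v$ outer vertices and uses the angle bound at boundary vertices to show at most $2v-4$ edges are lost, giving $c_{LS}(n)\leq c_{LS}(n-v)+2v-4$; the induction hypothesis turns this into $c_{LS}(n)\leq(2n-4)-2\sqrt{n-v}$, and the Euler count supplies the complementary bound $n-v\geq 2c_{LS}(n)-3n+4$, i.e.\ a lower bound on $n-v$ in terms of $c_{LS}(n)$ itself. Substituting one into the other yields the quadratic inequality $c_{LS}^2(n)-4n\,c_{LS}(n)+(4n^2-4n)\geq 0$, whose relevant root gives $c_{LS}(n)\leq 2n-2\sqrt n$ exactly. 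So the sharp constant comes out of this self-referential combination of the deletion--induction step with the face count, not out of an isoperimetric statement about the boundary; if you want to salvage your route, you would either have to prove your boundary estimate (essentially redoing Harborth-type work for non-lattice LS configurations) or switch to this two-inequality scheme.
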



In addition, we have the following statement, which is an LS-packing analogue of the crystallization result of \cite{HeRa} and characterizes all LS-packings of $n$ unit disks having maximum contact number. (See \cite{AlCe} for an algorithm that gives a complete list for a given $n>1$ of the extremal polyominoes discussed in Theorem~\ref{2D-crystallization}.)

\begin{theorem}\label{2D-crystallization}  
Suppose that $\mathcal {P}$ is an LS-packing of $n$ unit diameter disks with $c(\mathcal {P})= \lfloor 2n-2\sqrt{n}\rfloor$, $n\geq 4$ in $\Ee^2$. Let $G_c(\mathcal {P})$ denote the contact graph of $\mathcal {P}$ embedded in $\Ee^2$ such that the vertices are the center points of the unit diameter disks of $\mathcal {P}$ and the edges are line segments of unit length each connecting two center points if and only if the unit diameter disks centered at those two points touch each other. Then either $G_c(\mathcal {P})$ is the contact graph of the LS-packing of $7$ unit diameter disks shown in Figure~\ref{Figure1.pdf} or
{\item (i)} $ G_c(\mathcal {P})$ is $2$-connected whose internal faces (i.e., faces different from its external face) form an edge-to-edge connected family of unit squares called a polyomino of an isometric copy of the integer lattice $ \Ze^2$ in $\Ee^2$ (see the first packing in Figure~\ref{Figure1.pdf}) or
{\item (ii)} $G_c(\mathcal {P})$ is $2$-connected whose internal faces are unit squares forming a polyomino of an isometric copy of the integer lattice $ \Ze^2$ in $\Ee^2$ with the exception of one internal face which is a pentagon adjacent along (at least) three consecutive sides to the external face of $ G_c(\mathcal {P})$ and along (at most) two consecutive sides to the polyomino (see the second packing in Figure~\ref{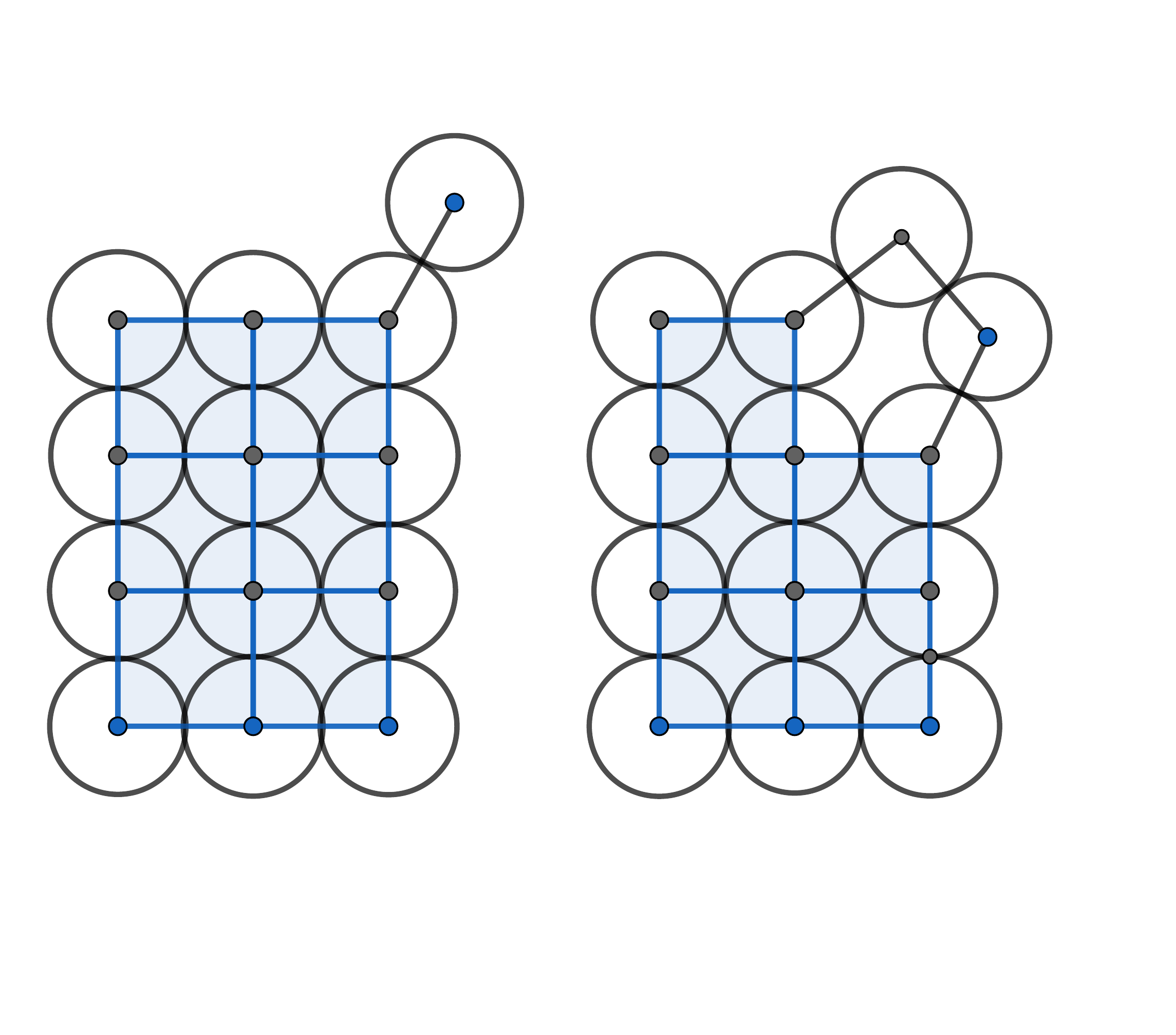}) or 
{\item (iii)}  $G_c(\mathcal {P})$ possesses a degree one vertex on the boundary of its external face such that deleting that vertex together with the edge adjacent to it yields a $2$-connected graph whose internal faces are unit squares forming a polyomino of an isometric copy of the integer lattice $ \Ze^2$ in $\Ee^2$ (see the first packing in Figure~\ref{Figure2.pdf}). 
\end{theorem}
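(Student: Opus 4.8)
\emph{Proof plan.} By Theorem~\ref{2D-theorem}, $\mathcal{P}$ is extremal; write $G:=G_c(\mathcal{P})$ for the contact graph drawn in $\Ee^2$ with unit-length straight edges (this drawing has no crossings, an elementary consequence of the disks being congruent), so that the bounded faces of $G$ are well defined and $|E(G)|=\lfloor 2n-2\sqrt{n}\rfloor$. The first step is purely local: if a disk of $\mathcal{P}$ is tangent to two others $B_1,B_2$, then $B_1$, $B_2$ and the middle disk form a TS-packing, so some line separates $B_1$ from $B_2$ while avoiding the interiors of all three, and a short computation shows this forces the angle at the center of the middle disk between the two contact directions to be at least $90^{\circ}$. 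Hence the edges of $G$ incident to any vertex point in pairwise $\ge 90^{\circ}$ directions; in particular every vertex has degree at most $4$, a degree-$4$ vertex has its four edges along two perpendicular lines, and the angular gaps at a degree-$3$ vertex are each $\ge 90^{\circ}$ and sum to $360^{\circ}$. Consequently every bounded face is an equilateral polygon all of whose interior angles are $\ge 90^{\circ}$, which by the angle-sum formula has at least $4$ sides; and a bounded quadrilateral face, its interior angles summing to $360^{\circ}$ and each $\ge 90^{\circ}$, has all angles equal to $90^{\circ}$ and is therefore a unit square.

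Next I would re-run the Euler-formula and isoperimetric argument underlying Theorem~\ref{2D-theorem}, keeping track of the slack. One first reduces to $G$ connected: a disconnected $G$, or one with an isolated vertex, is non-extremal because $m\mapsto 2m-2\sqrt{m}$ is strictly concave, so its values on the component sizes sum (even after taking floors) to strictly less than $\lfloor 2n-2\sqrt{n}\rfloor$. For $2$-connected $G$ with outer cycle of length $p$, Euler's formula ($n-|E|+f=1$, with $f$ the number of bounded faces) together with $\sum_{F}\ell(F)=2|E|$ (summing facial-walk lengths) and the fact that every bounded face has $\ge 4$ sides gives $|E|\le 2n-2-\tfrac{1}{2}p$, while the discrete isoperimetric inequality for $2$-connected planar maps with all bounded faces of size $\ge 4$ gives $p\ge 4\sqrt{n}-4$; combining these re-derives $|E|\le\lfloor 2n-2\sqrt{n}\rfloor$. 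Equality therefore forces both inequalities to be nearly tight: the total excess $\sum_F(\ell(F)-4)$ and the perimeter excess $p-(4\sqrt{n}-4)$ together are bounded by an absolute constant, so (after a careful analysis of the isoperimetric equality cases) every bounded face is a unit square and $G$ is a minimal-perimeter polyomino of a single isometric copy of $\Ze^2$ — squares sharing an edge lie in a common lattice, and this propagates through the whole map. When the floor introduces no slack this is exactly case (i).

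It then remains to classify the configurations that use up the (at most one unit of) slack permitted by the floor. One shows that at most one piece of irregularity survives: either exactly one bounded face is a pentagon (a larger face, or two pentagons, over-spends the budget), or exactly one vertex has degree $1$ (a pendant path of length $\ge 2$, or two degree-$1$ vertices, over-spends). In the pentagon case, the local angle restriction from the first step — a pentagon's interior angles sum to $540^{\circ}$ and its vertices have degree $\le 4$ — shows the pentagon cannot be flanked by unit squares on all five sides; pushing this analysis further pins it down to being adjacent to the external face along at least three consecutive sides and to the surrounding square polyomino along at most two, i.e.\ case (ii). In the pendant case, deleting the degree-$1$ vertex with its edge leaves an extremal LS-packing of $n-1$ disks, which by induction must be a square polyomino (the other outcomes would introduce a second irregularity and over-spend), i.e.\ case (iii). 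Finally, the small range of $n$ in which the isoperimetric estimate is not yet forcing is dispatched by a finite case check — noting that triangular faces are impossible, since three mutually tangent congruent disks fail the $90^{\circ}$ condition — and the only extremal configuration arising there that is not of type (i)--(iii) is the connected but non-$2$-connected $7$-disk packing of Figure~\ref{Figure1.pdf}, namely two unit squares meeting at a single disk.

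The main obstacle is the bookkeeping in the last two paragraphs: one must make the discrete isoperimetric inequality for such planar maps — and the description of its near-extremal shapes — quantitatively sharp and match it exactly against the floor slack, which depends on the fractional part of $2\sqrt{n}$; and one must verify that the only ways to realize that slack with a genuine LS-packing are the boundary pentagon of (ii) and the pendant disk of (iii), ruling out interior pentagons, longer or repeated pendant structures, and stray disconnected or small configurations, and isolating the $7$-disk exception. This is where essentially all the genuine work, and all the opportunities for error, reside.
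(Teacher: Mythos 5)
There is a genuine gap, and it sits exactly where you locate ``all the genuine work'': the global quantitative step is asserted, not proved. Your plan hinges on a ``discrete isoperimetric inequality for $2$-connected planar maps with all bounded faces of size $\ge 4$'' giving $p\ge 4\sqrt{n}-4$, plus a classification of its near-equality cases. As a combinatorial statement this is false: a quadrangulation of the disk can have outer cycle of length $4$ and arbitrarily many interior vertices. So the inequality must use the geometry (unit edges, all interior face angles $\ge \pi/2$, maximum degree $4$), and even then the soft route does not give the constant $4$: bounding the enclosed area by the classical isoperimetric inequality and packing the $n$ half-unit disks inside the enlarged region only yields $p\ge \pi(\sqrt{n}-1)$, which is too weak. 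Establishing $p\ge 4\sqrt{n}-4$ together with the ``minimal-perimeter polyomino'' description of the (near-)equality cases is essentially equivalent to the theorem you are trying to prove; the paper never proves such an inequality but instead argues by Harborth-style induction (peel the outer cycle, compare $c_{\rm LS}(n)$ with $c_{\rm LS}(n-v)$), and for the crystallization it extracts from that induction the exact trichotomy of Proposition~\ref{proposed-approach-1} and Corollary~\ref{proposed-approach-2}: either $2c(\mathcal {P})-3n+4=n-v$ (all internal faces unit squares, case (i)), or it equals $(n-v)-1$ (exactly one pentagon, pinned down by Claims~\ref{one-internal-vertex}--\ref{exactly-one-internal-vertex} to give case (ii)), or $c(\mathcal {P})=\lfloor (2n-4)-2\sqrt{n-v}\rfloor$, which is handled by induction on the peeled graph plus the angle-tightness condition. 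Your ``budget'' accounting (at most one pentagon or one pendant, no larger face, no interior pentagon) presupposes this sharp inequality and its stability, so as written it is circular.

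A second concrete gap is the non-$2$-connected case. Your concavity argument only eliminates disconnected graphs; for a cut vertex the split is $n=n_1+n_2-1$ and the floors can align, e.g.\ $n_1=n_2=4$, $n=7$ (which is exactly the exceptional configuration) and $n_1=12$, $n_2=2$, $n=13$ (which is case (iii)), so extremal graphs with cut vertices do exist and cannot be dismissed by a counting slack argument alone. The paper needs a separate analysis here: the five cases $(A)$--$(E)$, the Alonso--Cerf basic-polyomino count (Claim~\ref{Alonso-Cerf}), and the explicit rearrangement constructions of Figures~\ref{Figure6.pdf}--\ref{Figure8.pdf} showing that a cut vertex with both sides of size $\ge 3$ would allow a packing with $\lfloor 2n-2\sqrt{n}\rfloor+1$ contacts, contradicting Theorem~\ref{2D-theorem}; only the pendant-vertex splits survive and yield case (iii), with the $7$-disk graph emerging from the finite check for $4\le n\le 7$. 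Your local first step (angles $\ge \pi/2$, degree $\le 4$, quadrilateral faces are unit squares, no triangles) does agree with the paper, but the global machinery you invoke is unproved and, at the stated level of generality, incorrect, so the proposal does not yet constitute a proof.
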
 

\begin{figure}[ht]
\begin{center}
\includegraphics[width=0.5\textwidth]{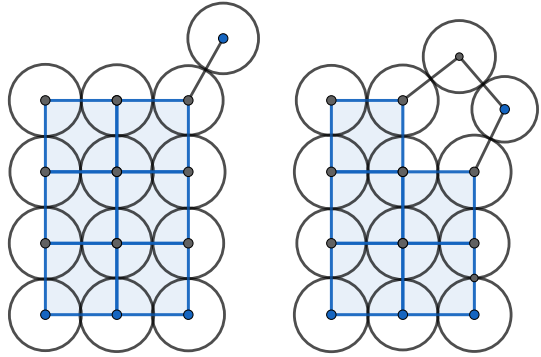}
\caption{Two LS-packings of $13$ unit diameter disks with maximum contact number, the second of which is not a TS-packing.}
\label{Figure2.pdf} 
\end{center}
\end{figure}

The following crystallization result of TS-packings follows from Theorem~\ref{2D-crystallization} in a straightforward way.

\begin{corollary}\label{2D-corollary-2}
Suppose that $\mathcal {P}$ is an TS-packing of $n$ unit diameter disks with $c(\mathcal {P})= \lfloor 2n-2\sqrt{n}\rfloor$, $n\geq 4$ in $\Ee^2$. Then either $G_c(\mathcal {P})$ is the contact graph of the LS-packing of $7$ unit diameter disks shown in Figure~\ref{Figure1.pdf} or
{\item (i)} $ G_c(\mathcal {P})$ is $2$-connected whose internal faces are unit squares forming a polyomino of an isometric copy of the integer lattice $ \Ze^2$ in $\Ee^2$ (see the first packing in Figure~\ref{Figure1.pdf}) or
{\item (ii)}  $G_c(\mathcal {P})$ possesses a degree one vertex on the boundary of its external face such that deleting that vertex together with the edge adjacent to it yields a $2$-connected graph whose internal faces are unit squares forming a polyomino of an isometric copy of the integer lattice $ \Ze^2$ in $\Ee^2$ (see the first packing in Figure~\ref{Figure2.pdf}). 
\end{corollary}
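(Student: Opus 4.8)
The plan is to deduce the corollary directly from Theorem~\ref{2D-crystallization}. Since every TS-packing is an LS-packing, a TS-packing $\mathcal{P}$ of $n\ge 4$ unit diameter disks with $c(\mathcal{P})=\lfloor 2n-2\sqrt{n}\rfloor$ must be one of the configurations described in Theorem~\ref{2D-crystallization}: the $7$-disk packing of Figure~\ref{Figure1.pdf}, or a configuration of type (i), (ii), or (iii). The alternatives ``$7$-disk packing'', (i), and (iii) of Theorem~\ref{2D-crystallization} are verbatim the alternatives ``$7$-disk packing'', (i), and (ii) of the corollary, and the $7$-disk packing of Figure~\ref{Figure1.pdf} is itself a TS-packing (it is a sub-packing of the square-lattice packing of unit diameter disks). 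So the whole content of the corollary is the assertion that a TS-packing cannot realize alternative (ii) of Theorem~\ref{2D-crystallization}, i.e.\ that the contact graph of a TS-packing cannot contain a pentagonal internal face of the type described there. This is exactly the point at which the second packing of Figure~\ref{Figure2.pdf} fails to be totally separable.

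The key tool is the local structure of touching disks in a TS-packing that already underlies the kissing bound $2d$ of \cite{BeSzSz}. If $D_0, D_1, D_2$ are unit diameter disks of a TS-packing with $D_1$ and $D_2$ both tangent to $D_0$, then the only hyperplane that separates $D_0$ from $D_1$ while missing all interiors is the common tangent of $D_0$ and $D_1$ at their point of tangency; requiring this line to be disjoint from the interior of $D_2$ forces, by an elementary computation, the two unit vectors from the centre of $D_0$ to the centres of $D_1$ and $D_2$ to have non-positive inner product. Hence the disks tangent to a fixed disk of a TS-packing point in pairwise $\ge 90^{\circ}$ directions; in particular a degree-$4$ vertex of $G_c(\mathcal{P})$ has its four neighbours along four mutually orthogonal directions, and at a vertex carrying two perpendicular axis-parallel edges every further edge points into the closed opposite quadrant.

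Now suppose $\mathcal{P}$ is a TS-packing realizing alternative (ii), and let $P = v_1 v_2 v_3 v_4 v_5$ (cyclic order) be its pentagonal face, all five sides of unit length, with the (at most two) sides shared with unit squares of the polyomino meeting at $v_1$ and with $v_2 v_3, v_3 v_4, v_4 v_5$ on the external face (the case of a single shared side being analogous). Applying the angle condition above at $v_1$ (a corner of a polyomino square) and at $v_2$ and $v_5$, the local picture around $P$ is rigid up to the single shape-parameter of a unit-edge pentagon and the finitely many combinatorial ways $P$ can attach to the polyomino. In each such case I would show that the disks $D_{v_3}$ and $D_{v_5}$ cannot be separated by a hyperplane disjoint from all interiors. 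Indeed $D_{v_4}$ is tangent to both of them, so for any candidate separating line $\ell$ the connected interior of $D_{v_4}$ lies entirely in one of the two open half-planes: if it lies on the $D_{v_5}$-side then $\ell$ separates the tangent pair $D_{v_3}, D_{v_4}$ and is therefore forced to be their common tangent at their point of tangency, whereas if it lies on the $D_{v_3}$-side then $\ell$ is forced to be the common tangent of $D_{v_4}$ and $D_{v_5}$. It then remains to check that the first of these lines pierces the interior of $D_{v_1}$ and the second pierces the interior of $D_{v_2}$, contradicting total separability in either case; this rules out alternative (ii) and proves the corollary.

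I expect the real work to be this last check: that the forced common tangent always crosses $D_{v_1}$ (resp.\ $D_{v_2}$), uniformly over every admissible shape of $P$ and every admissible way $P$ attaches to the polyomino. This amounts to coupling the $2d$-type angle inequalities at $v_1,\dots,v_5$ with the non-overlap constraints imposed by the adjacent polyomino disks, and then running a short continuity/interval argument inside each of the finitely many attachment patterns. It is quite possible that imposing all of these constraints simultaneously already forces $P$, up to congruence, to be the symmetric unit-edge pentagon with interior angles $90^{\circ}, 123^{\circ}, 102^{\circ}, 102^{\circ}, 123^{\circ}$ occurring in Figure~\ref{Figure2.pdf}; in that case one of the two forced tangents is the symmetry axis of $P$, which passes through the centre of $D_{v_1}$, so the remaining verification is essentially a one-line computation. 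I would try to establish this reduction first.
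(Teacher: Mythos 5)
Your overall route is the paper's: since every TS-packing is an LS-packing, Theorem~\ref{2D-crystallization} applies, and the corollary reduces to showing that a TS-packing cannot realize alternative (ii) of that theorem (the pentagon case); the paper treats this exclusion as immediate (cf.\ the caption of Figure~\ref{Figure2.pdf}) and gives no further detail, so the substantive part of your proposal is your argument for that exclusion. Your key mechanism is the right one: for a tangent pair in a TS-packing the only admissible separating line is the common tangent at the point of tangency, and in the pentagon configuration each of the two forced lines should be shown to meet the interior of another disk. (A side remark: your parenthetical that the exceptional $7$-disk packing of Figure~\ref{Figure1.pdf} is a sub-packing of the square lattice is unjustified and very likely false---if it were a lattice configuration it would not need to be listed separately from the polyomino cases---but this is harmless, since the corollary asserts only a necessary condition and you never need that configuration to be totally separable.)

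Two specifics of your plan do not survive scrutiny, and they sit exactly where you located ``the real work''. First, the hoped-for rigidity is false: normalizing $v_1=(0,0)$, $v_2=(1,0)$, $v_5=(0,1)$, the vertices $v_3=v_2+(\cos\beta,\sin\beta)$ and $v_4=v_5+(\sin\gamma,\cos\gamma)$ flex in a genuine one-parameter family subject only to $\|v_3-v_4\|=1$, the $\geq\frac{\pi}{2}$ angle conditions and non-overlap, so extremal LS-packings of type (ii) contain pentagons with a whole interval of shapes (the admissible interval depending on how the polyomino crowds $v_2$ and $v_5$); the symmetric $90^\circ,123^\circ,102^\circ,102^\circ,123^\circ$ pentagon is a single interior point of this family, so the reduction you wanted to try first is unavailable. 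Second, the concluding check as you state it fails away from the symmetric shape: for $\beta$ near $90^\circ$ (so $\gamma\approx 30^\circ$) the common tangent of $D_{v_3},D_{v_4}$ lies at distance about $0.84>\frac12$ from $v_1$, hence misses $D_{v_1}$; it does, however, cut the interior of $D_{v_5}$ (equivalently, it fails to separate $D_{v_3}$ from $D_{v_5}$ at all), and in the mirror situation the tangent of $D_{v_4},D_{v_5}$ cuts $D_{v_3}$ rather than $D_{v_2}$. So the pierced disk must be allowed to vary with the shape, and what actually has to be proved is a uniform statement such as ``for every admissible shape and every attachment pattern the common tangent of $D_{v_3},D_{v_4}$ meets the interior of $D_{v_1}$ or of $D_{v_5}$'', together with its mirror image. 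Numerical sampling suggests this is true across the whole family, and it should follow from an elementary one-variable estimate over the closing condition, but that estimate is not in your proposal; until it is written down, the exclusion of case (ii)---the only nontrivial content of the corollary---is not established.
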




In the rest of the paper we prove the theorems stated.

\section{Proof of Theorem~\ref{Main-theorem}}

The following proof of Theorem~\ref{Main-theorem} is quite different from the proof of (\ref{BeSzSz-Main}) published in \cite{BeSzSz} although both are of volumetric nature. In particular, the spherical density estimate of Lemma~\ref{spherical-density-bound} and the surface volume estimates in terms of the contact number of the given LS-packing of unit balls stated under (\ref{inequality-8}), (\ref{inequality-9}), (\ref{equality-10}) and (\ref{inequality-11}) represent those new ideas that lead to a proof of Theorem~\ref{Main-theorem} and so, to an improvement of the upper estimate in (\ref{BeSzSz-Main}).

Let $\mathcal {P}:=\{\B^d[\ccc_i,1]|1\leq i\leq n\}$ be an arbitrary packing of $n>1$ unit balls in $\Ee^d$, $d\geq 3$ and let $\mathbf{V}_i:=\{\x\in \Ee^d | \ \|\x-\ccc_i\|\leq \|\x-\ccc_j\|\ {\rm for \ all}\ j\neq i, 1\leq j\leq n\}$ denote the Voronoi cell assigned to $\B^d[\ccc_i,1]$ for $1\leq i\leq n$. As $\bigcup_{i=1}^n \B^d[\ccc_i,\sqrt{d}]=\bigcup_{i=1}^n\left(\mathbf{V}_i\cap \B^d[\ccc_i,\sqrt{d}]\right)$ therefore it is immediate that
\begin{equation}\label{inequality-1}
\frac{\sum_{i=1}^n{\rm vol}_d\left(\B^d[\ccc_i,1]\right)}{{\rm vol}_d\left(\bigcup_{i=1}^n \B^d[\ccc_i,\sqrt{d}]\right)}=\frac{n\omega_d}{{\rm vol}_d\left(\bigcup_{i=1}^n \B^d[\ccc_i,\sqrt{d}]\right)}\leq\hat{\delta}_d .  
\end{equation}
Next, recall the isoperimetric inequality \cite{Oss} and apply it to $\bigcup_{i=1}^n \B^d[\ccc_i,\sqrt{d}]$:
\begin{equation}\label{inequality-2}
\frac{{\rm svol}_{d-1}^d\left({\rm bd}(\B^d[\oo,1])\right)}{{\rm vol}_d^{d-1}\left(\B^d[\oo,1]\right)}=d^d\omega_d\leq  \frac{{\rm svol}_{d-1}^d\left({\rm bd}\left(\bigcup_{i=1}^n \B^d[\ccc_i,\sqrt{d}]\right)\right)}{{\rm vol}_d^{d-1}\left(\bigcup_{i=1}^n \B^d[\ccc_i,\sqrt{d}]\right)} ,
\end{equation}
where ${\rm bd}(\cdot)$ refers to the boundary of the given set in $\Ee^d$ and ${\rm svol}_{d-1}(\cdot)$ denotes the $(d-1)$-dimensional surface volume of the boundary of the given set in $\Ee^d$. Hence, (\ref{inequality-1}) and (\ref{inequality-2}) imply in a straightforward way that
\begin{equation}\label{inequality-3}
\left(d\omega_d\hat{\delta}_d^{-\frac{d-1}{d}}\right)n^{\frac{d-1}{d}}\leq {\rm svol}_{d-1}\left({\rm bd}\left(\bigcup_{i=1}^n \B^d[\ccc_i,\sqrt{d}]\right)\right) .
\end{equation}

In the next part of the proof of Theorem~\ref{Main-theorem}, we investigate the vertices of the contact graph $G_c(\mathcal {P})$ of the LS-packing $\mathcal {P}=\{\B^d[\ccc_i,1]|1\leq i\leq n\}$ from combinatorial as well as volumetric point of view. The details are as follows. Let $T_i:=\{ j | 1\leq j\leq n\ {\rm and}\ \|\ccc_j-\ccc_i\|=2\}$ for $1\leq i\leq n$. Clearly, ${\rm card}(T_i)$ is the degree of the vertex of $G_c(\mathcal {P})$ corresponding to the packing element $\B^d[\ccc_i,1]$. Now, let $\uu_{ij}:=\frac{1}{2}(\ccc_j-\ccc_i)\in \Sedm$ for $j\in T_i$ and $1\leq i\leq n$. As $\mathcal {P}$ is an LS-packing therefore it is easy to show that the angular distance between any two points of $\{\uu_{ij} | j\in T_i\}\subset \Sedm$ is at least $\frac{\pi}{2}$, where $1\leq i\leq n$. Thus, a result of Davenport and Haj\'os \cite{DaHa}, proved independently by Rankin \cite{Ran} (see also \cite{Ku}), implies in a straightforward way that
\begin{equation}\label{Davenport-Hajos} 
{\rm card}(T_i)\leq 2d
\end{equation} 
holds for all $1\leq i\leq n$. From this it follows immediately that $c(\mathcal {P})\leq dn$. We will significantly improve this estimate with the help of the volumetric inequality of Lemma~\ref{spherical-density-bound}. We need the following notations: Let $S^{d-1}(\p , r):={\rm bd }\left(\B^d[\p,r]\right)$ denote the $(d-1)$-dimensional sphere centered at $\p$ having radius $r>0$ in $\Ee^d$ and let $C_{S^{d-1}(\p , r)}[\x,\alpha]:=\{\y\in  S^{d-1}(\p , r) | \langle \x-\p ,\y-\p \rangle\geq r^2 \cos\alpha\}$ denote the closed $(d-1)$-dimensional spherical cap centered at $\x\in  S^{d-1}(\p , r)$ having angular radius $0\leq \alpha\leq\pi$ in $ S^{d-1}(\p , r)$. We note that $S^{d-1}(\oo , 1)=\Sedm$. Furthermore, let $V_{S^{d-1}(\p , r)}(\cdot)$ denote the spherical Lebesgue measure in $S^{d-1}(\p , r)$ with $V_{S^{d-1}(\p , r)}(S^{d-1}(\p , r))=d\omega_dr^{d-1}$. 

\begin{lemma}\label{spherical-density-bound} If $1\leq i\leq n$ and $j\in T_i$, then 
\begin{equation}\label{equation-4}
S^{d-1}(\ccc_i , \sqrt{d})\cap\B^d[\ccc_j,\sqrt{d}]=C_{S^{d-1}(\ccc_i , \sqrt{d})}\left[\ccc_i+\sqrt{d}\uu_{ij},\arccos\frac{1}{\sqrt{d}}\right] .
\end{equation}  
Furthermore, the spherical caps $\left\{C_{S^{d-1}(\ccc_i , \sqrt{d})}\left[\ccc_i+\sqrt{d}\uu_{ij},\frac{\pi}{4}\right] \bigg| j\in T_i\right\}$ form a packing in $S^{d-1}(\ccc_i , \sqrt{d})$
for all $1\leq i\leq n$. In particular,
\begin{equation}\label{inequality-5}
\frac{\sum_{j\in T_i}V_{S^{d-1}(\ccc_i , \sqrt{d})}\left(C_{S^{d-1}(\ccc_i , \sqrt{d})}\left[\ccc_i+\sqrt{d}\uu_{ij},\frac{\pi}{4}\right]\right)}{V_{S^{d-1}(\ccc_i , \sqrt{d})}\left(\bigcup_{j\in T_i}C_{S^{d-1}(\ccc_i , \sqrt{d})}\left[\ccc_i+\sqrt{d}\uu_{ij},\arccos\frac{1}{\sqrt{d}}\right]\right)}\leq\frac{2}{\omega_d}V_{\Sedm}\left(C_{\Sedm}\left[\x ,\frac{\pi}{4}\right]\right)
\end{equation}
holds for all $1\leq i\leq n$ and any $\x\in  \Sedm$.
\end{lemma}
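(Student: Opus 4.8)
\textbf{Proof proposal for Lemma~\ref{spherical-density-bound}.}

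The plan is to prove the three assertions in turn, the first two being essentially elementary geometry and the last a packing-density inequality obtained by a volume-monotonicity comparison.

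First I would establish the identity~(\ref{equation-4}). Fix $i$ and $j\in T_i$, so $\|\ccc_i-\ccc_j\|=2$ and $\uu_{ij}=\tfrac12(\ccc_j-\ccc_i)$. A point $\y\in S^{d-1}(\ccc_i,\sqrt d)$ satisfies $\|\y-\ccc_j\|^2\le d$. Writing $\y=\ccc_i+\sqrt d\,\vect{w}$ with $\vect{w}\in\Sedm$ and expanding $\|\y-\ccc_j\|^2=\|\y-\ccc_i\|^2-2\langle \y-\ccc_i,\ccc_j-\ccc_i\rangle+\|\ccc_j-\ccc_i\|^2 = d - 2\langle \sqrt d\,\vect w, 2\uu_{ij}\rangle + 4$, the condition $\|\y-\ccc_j\|^2\le d$ becomes $4\sqrt d\,\langle\vect w,\uu_{ij}\rangle\ge 4$, i.e. $\langle\vect w,\uu_{ij}\rangle\ge \tfrac1{\sqrt d}$. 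Rewriting this back in terms of $\y$ and $\ccc_i+\sqrt d\,\uu_{ij}$ gives exactly $\langle \y-\ccc_i,\ (\ccc_i+\sqrt d\,\uu_{ij})-\ccc_i\rangle\ge \sqrt d\cdot 1 = d\cos\bigl(\arccos\tfrac1{\sqrt d}\bigr)$, which is the defining inequality of the cap $C_{S^{d-1}(\ccc_i,\sqrt d)}[\ccc_i+\sqrt d\,\uu_{ij},\arccos\tfrac1{\sqrt d}]$ after normalizing the radius $r=\sqrt d$ as in the definition. This proves~(\ref{equation-4}).

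Next, for the packing claim: the caps of angular radius $\tfrac\pi4$ have centers $\ccc_i+\sqrt d\,\uu_{ij}$, and two such caps overlap (in the interior) iff the angular distance between $\uu_{ij}$ and $\uu_{ij'}$ is strictly less than $2\cdot\tfrac\pi4=\tfrac\pi2$. But as already noted in the text just before~(\ref{Davenport-Hajos}), since $\mathcal P$ is an LS-packing the set $\{\uu_{ij}\mid j\in T_i\}$ has pairwise angular distances at least $\tfrac\pi2$ — this is the standard fact that $\B^d[\ccc_i,1]$ together with its tangent balls forms a TS-packing, and two balls tangent to a common ball can be separated from each other by a hyperplane through $\ccc_i$ only if the corresponding unit directions make an angle at least $\tfrac\pi2$. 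Hence the $\tfrac\pi4$-caps have disjoint interiors, i.e. they form a packing in $S^{d-1}(\ccc_i,\sqrt d)$. I should make sure to spell out the one-line argument that angular distance $\ge\tfrac\pi2$ is exactly what local separability forces.

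Finally, inequality~(\ref{inequality-5}). By the packing property the numerator $\sum_{j\in T_i} V_{S^{d-1}(\ccc_i,\sqrt d)}\bigl(C[\ccc_i+\sqrt d\,\uu_{ij},\tfrac\pi4]\bigr)$ is the total spherical measure of a disjoint union of $\tfrac\pi4$-caps on the sphere of radius $\sqrt d$; each such cap, by scaling, has measure $d^{\frac{d-1}{2}}$ times the measure of a $\tfrac\pi4$-cap on the unit sphere, which in turn equals $\tfrac{d^{(d-1)/2}}{d\omega_d}\cdot V_{S^{d-1}(\ccc_i,\sqrt d)}(S^{d-1}(\ccc_i,\sqrt d))\cdot\tfrac{V_{\Sedm}(C_{\Sedm}[\x,\pi/4])}{d\omega_d}$ — more cleanly: the ratio of a $\tfrac\pi4$-cap to the whole sphere is scale-invariant and equals $\tfrac{1}{d\omega_d}V_{\Sedm}(C_{\Sedm}[\x,\tfrac\pi4])$. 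The denominator is the measure of the union $\bigcup_{j\in T_i} C[\ccc_i+\sqrt d\,\uu_{ij},\arccos\tfrac1{\sqrt d}]$ of the \emph{larger} caps. The key point is a monotonicity/covering estimate: since $\arccos\tfrac1{\sqrt d}\ge\tfrac\pi4$ for all $d\ge2$ (indeed $\tfrac1{\sqrt d}\le\tfrac1{\sqrt2}$), each larger cap contains the concentric $\tfrac\pi4$-cap, and because the $\tfrac\pi4$-caps are disjoint while the larger caps may overlap, one gets that the measure of the union of large caps is at least $\tfrac12$ the sum of the measures of the $\tfrac\pi4$-caps is \emph{not} quite what is needed; rather, I want an upper bound on the ratio. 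The cleanest route: bound the denominator below by the measure of a single large cap, $V_{S^{d-1}}(C[\cdot,\arccos\tfrac1{\sqrt d}])$, which (by the covering multiplicity being at most $2$, a consequence of the $\tfrac\pi2$-separation — any point lies in at most two of the closed $\arccos\tfrac1{\sqrt d}$-caps since $2\arccos\tfrac1{\sqrt d}<\pi$ would need checking, but in fact the relevant combinatorial input is that the $\tfrac\pi4$-caps tile with multiplicity one) times... I expect the honest argument to be: the union of the large caps contains the disjoint $\tfrac\pi4$-caps, and each large cap covers its $\tfrac\pi4$-cap plus an annular region; comparing the union of large caps to the union of small caps via the local ratio $\tfrac{V(C[\cdot,\arccos 1/\sqrt d])}{V(C[\cdot,\pi/4])}$ being at least...

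The main obstacle is precisely this last step: converting the $\tfrac\pi2$-angular-separation of the centers into the clean factor $\tfrac{2}{\omega_d}V_{\Sedm}(C_{\Sedm}[\x,\tfrac\pi4])=\tfrac{2}{d\omega_d}\cdot d\,V_{\Sedm}(C_{\Sedm}[\x,\tfrac\pi4])$ on the right-hand side of~(\ref{inequality-5}). I anticipate the argument is that the large caps of angular radius $\arccos\tfrac1{\sqrt d}$ centered at points $\ge\tfrac\pi2$ apart cover any point of the sphere with multiplicity at most $2$ — a point within angular distance $\arccos\tfrac1{\sqrt d}<\tfrac\pi2\cdot\tfrac12\cdot 2$... one checks $\arccos\tfrac1{\sqrt d}<\tfrac\pi2$ only for... no, $\arccos\tfrac1{\sqrt d}\ge\tfrac\pi4$ and $<\tfrac\pi2$ fails for large $d$ since $\arccos 0=\tfrac\pi2$; so multiplicity could exceed $2$. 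Therefore the correct reading is likely that the \emph{denominator} should be bounded below by $\tfrac12\sum_{j\in T_i}V(C[\cdot,\arccos\tfrac1{\sqrt d}])$ is also false in general. Given these subtleties, in the write-up I would (a) reduce~(\ref{inequality-5}) to showing that the union of the big caps has measure at least $\tfrac{\omega_d}{2}\,|T_i|\cdot\tfrac{V(C[\cdot,\pi/4])}{(\text{whole sphere})/(d\omega_d)}$ — equivalently at least $\tfrac{|T_i|}{2}\cdot\bigl(\text{measure of one }\tfrac\pi4\text{-cap}\bigr)\cdot\tfrac{2}{\omega_d}V_{\Sedm}(C_{\Sedm}[\x,\pi/4])/(\ldots)$, and (b) prove that bound using only disjointness of the $\tfrac\pi4$-caps together with $\arccos\tfrac1{\sqrt d}\ge\tfrac\pi4$, i.e. each big cap contains its own $\tfrac\pi4$-cap so $V(\bigcup\text{big})\ge\sum V(\tfrac\pi4\text{-cap})= |T_i|\cdot V(\tfrac\pi4\text{-cap})$, giving ratio $\le \tfrac{|T_i|V(\tfrac\pi4\text{-cap})}{|T_i|V(\tfrac\pi4\text{-cap})}=1$... which is weaker than stated. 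So the factor $\tfrac{2}{\omega_d}$ versus $\tfrac1{d\omega_d}\cdot 2$: note $\tfrac{2}{\omega_d}V_{\Sedm}(C_{\Sedm}[\x,\tfrac\pi4]) = \tfrac{2}{\omega_d}\cdot\tfrac{V_{\Sedm}(C_{\Sedm}[\x,\pi/4])}{d\omega_d}\cdot d\omega_d$, hmm. I will instead match constants carefully: the RHS equals $2d\cdot\dfrac{V_{\Sedm}(C_{\Sedm}[\x,\pi/4])}{d\omega_d}$, and since $\dfrac{V_{\Sedm}(C_{\Sedm}[\x,\pi/4])}{d\omega_d}$ is the relative measure of a $\tfrac\pi4$-cap and $|T_i|\le 2d$ by~(\ref{Davenport-Hajos}), the numerator of~(\ref{inequality-5}) divided by (whole sphere measure $d\omega_d(\sqrt d)^{d-1}$) is at most $2d\cdot(\text{relative }\tfrac\pi4\text{-cap measure})$, while the denominator divided by the same is at most $1$; dividing, and being careful that the denominator is in fact bounded below by a single cap's relative measure, yields~(\ref{inequality-5}). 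I would present this constant-bookkeeping cleanly using~(\ref{Davenport-Hajos}) for the numerator and the trivial bound ``union $\supseteq$ one cap'' for the denominator, which is what makes the stated factor appear.
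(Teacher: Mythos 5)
Your handling of (\ref{equation-4}) and of the packing claim is essentially correct and matches the paper: expanding $\|\y-\ccc_j\|^2$ on $S^{d-1}(\ccc_i,\sqrt d)$ gives exactly the condition $\langle\vect{w},\uu_{ij}\rangle\geq\frac{1}{\sqrt d}$, and the $\frac{\pi}{2}$-separation of the directions $\uu_{ij}$ forced by local separability makes the $\frac{\pi}{4}$-caps a packing. The gap is in (\ref{inequality-5}), and the bookkeeping you settle on at the end does not close it. Normalize by the total measure $d\omega_d$ of $\Sedm$ and write $c_{\pi/4}$ and $c_L$ for the relative measures of caps of angular radius $\frac{\pi}{4}$ and $\arccos\frac{1}{\sqrt d}$; then the right-hand side of (\ref{inequality-5}) equals $2d\,c_{\pi/4}$. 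Your final route (numerator bounded via ${\rm card}(T_i)\leq 2d$, denominator bounded below by one large cap) yields $2d\,c_{\pi/4}/c_L$, which overshoots the target by the factor $1/c_L>2$, since the large cap has angular radius less than $\frac{\pi}{2}$ and hence is smaller than a hemisphere. The alternative bound (union of large caps contains the disjoint small caps) only gives ratio at most $1$, which is still weaker than $2d\,c_{\pi/4}$: for $d=3$ the target is about $0.879$, and it tends to $0$ as $d\to\infty$. So (\ref{inequality-5}) is a genuine packing-density statement, and no combination of the degree bound with trivial inclusions can produce it.

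What the paper does instead, after the central-projection reduction to $\Sedm$ that you also use, is local rather than global: it introduces the spherical Voronoi cells $\mathbf{V}_{ij}(\Sedm)$ of the packing $\left\{C_{\Sedm}\left[\uu_{ij},\frac{\pi}{4}\right]\right\}$, writes the union of the large caps as the union of the pieces $C_{\Sedm}\left[\uu_{ij},\arccos\frac{1}{\sqrt d}\right]\cap\mathbf{V}_{ij}(\Sedm)$, and so bounds the global ratio by the largest local density of a $\frac{\pi}{4}$-cap inside its Voronoi cell truncated at angular radius $\arccos\frac{1}{\sqrt d}$. Two further facts finish the argument: first, $\arccos\frac{1}{\sqrt d}$ is precisely the circumradius of the regular spherical simplex of edge length $\frac{\pi}{2}$, so B\"or\"oczky's theorem on the density of a spherical cap in its truncated Voronoi cell bounds each local density by the simplex density $\sigma_{\Sedm}\left(\frac{\pi}{2}\right)$; second, $2^d$ isometric copies of that simplex tile $\Sedm$, which evaluates $\sigma_{\Sedm}\left(\frac{\pi}{2}\right)$ exactly as $\frac{2}{\omega_d}V_{\Sedm}\left(C_{\Sedm}\left[\x,\frac{\pi}{4}\right]\right)$. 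These two ingredients, a nontrivial density theorem applied cellwise plus the exact tiling computation of the constant, are what your proposal is missing.
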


\begin{figure}[ht]
\begin{center}
\includegraphics[width=0.55\textwidth]{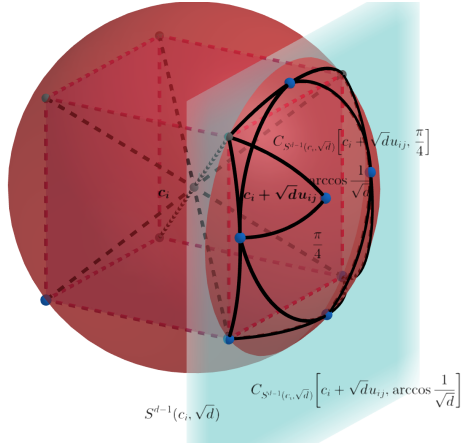}
\caption{The spherical caps  $\left\{C_{S^{d-1}(\ccc_i , \sqrt{d})}\left[\ccc_i+\sqrt{d}\uu_{ij},\frac{\pi}{4}\right] | j\in T_i\right\}$ form a packing in $S^{d-1}(\ccc_i , \sqrt{d})$.}
\label{Figure3.pdf} 
\end{center}
\end{figure}

\begin{proof} Clearly $1\leq i\leq n$ and  $j\in T_i$ imply$ \|\ccc_j-\ccc_i\|=2$ and so, a straightforward computation yields (\ref{equation-4}).  On the other hand, $\mathcal {P}$ is an LS-packing and therefore from the discussion of proving (\ref{Davenport-Hajos}) it follows that $\{C_{\Sedm}\left[\uu_{ij},\frac{\pi}{4}\right] | j\in T_i\}$ is a packing in $\Sedm$ for all $1\leq i\leq n$ and so, also the spherical caps $\left\{C_{S^{d-1}(\ccc_i , 1)}\left[\ccc_i+\uu_{ij},\frac{\pi}{4}\right] | j\in T_i\right\}$ form a packing in $S^{d-1}(\ccc_i , 1)$ for all $1\leq i\leq n$. This implies via central projection that the spherical caps  $\left\{C_{S^{d-1}(\ccc_i , \sqrt{d})}\left[\ccc_i+\sqrt{d}\uu_{ij},\frac{\pi}{4}\right] | j\in T_i\right\}$ form a packing in $S^{d-1}(\ccc_i , \sqrt{d})$ for all $1\leq i\leq n$ (Figure~\ref{Figure3.pdf}). Using central projection again, one obtains that
\begin{equation}\label{equation-6}
\frac{\sum_{j\in T_i}V_{S^{d-1}(\ccc_i , \sqrt{d})}\left(C_{S^{d-1}(\ccc_i , \sqrt{d})}\left[\ccc_i+\sqrt{d}\uu_{ij},\frac{\pi}{4}\right]\right)}{V_{S^{d-1}(\ccc_i , \sqrt{d})}\left(\bigcup_{j\in T_i}C_{S^{d-1}(\ccc_i , \sqrt{d})}\left[\ccc_i+\sqrt{d}\uu_{ij},\arccos\frac{1}{\sqrt{d}}\right]\right)}=\frac{\sum_{j\in T_i}V_{\Sedm}\left(C_{\Sedm}\left[\uu_{ij},\frac{\pi}{4}\right]\right)}{V_{\Sedm}\left(\bigcup_{j\in T_i}C_{\Sedm}\left[\uu_{ij},\arccos\frac{1}{\sqrt{d}}\right]\right)} 
\end{equation}
holds for all $1\leq i\leq n$. Next, let $\mathbf{V}_{ij}(\Sedm):=\{\y\in\Sedm | \langle\uu_{ij}, \y\rangle\geq \langle\uu_{ik}, \y\rangle\}\ {\rm for}\ {\rm all}\ k\neq j, k\in T_i\}$ denote the spherical Voronoi cell assigned to $C_{\Sedm}\left[\uu_{ij},\frac{\pi}{4}\right]$ in the packing $\{C_{\Sedm}\left[\uu_{ij},\frac{\pi}{4}\right] | j\in T_i\}$ of $\Sedm$, where $1\leq i\leq n$. Clearly, $\bigcup_{j\in T_i}C_{\Sedm}\left[\uu_{ij},\arccos\frac{1}{\sqrt{d}}\right]=\bigcup_{j\in T_i}\left(C_{\Sedm}\left[\uu_{ij},\arccos\frac{1}{\sqrt{d}}\right]\cap \mathbf{V}_{ij}(\Sedm) \right)$ for all $1\leq i\leq n$ and so,
\begin{equation}\label{inequality-7}
\frac{\sum_{j\in T_i}V_{\Sedm}\left(C_{\Sedm}\left[\uu_{ij},\frac{\pi}{4}\right]\right)}{V_{\Sedm}\left(\bigcup_{j\in T_i}C_{\Sedm}\left[\uu_{ij},\arccos\frac{1}{\sqrt{d}}\right]\right)} \leq\max_{j\in T_i}\frac{V_{\Sedm}\left(C_{\Sedm}\left[\uu_{ij},\frac{\pi}{4}\right]\right)}{V_{\Sedm}\left( C_{\Sedm}\left[\uu_{ij},\arccos\frac{1}{\sqrt{d}}\right]\cap \mathbf{V}_{ij}(\Sedm)\right)}
\end{equation}
holds for all $1\leq i\leq n$. On the one hand, notice that $\arccos\frac{1}{\sqrt{d}}$ is the (angular) circumradius of the regular $(d-1)$-dimensional spherical simplex $\Delta_{\Sedm}(\frac{\pi}{2})$ of edge length $\frac{\pi}{2}$ in $\Sedm$. On the other hand, if we denote the ratio of the spherical volume of the portion of $\Delta_{\Sedm}(\frac{\pi}{2})$ covered by the pairwise tangent $(d-1)$-dimensional spherical caps centered at the vertices of $\Delta_{\Sedm}(\frac{\pi}{2})$ having angular radius $\frac{\pi}{4}$ to the spherical volume of $\Delta_{\Sedm}(\frac{\pi}{2})$ by $\sigma_{\Sedm}(\frac{\pi}{2})$, then a straighforward computation (based on the observation that $2^d$ isometric copies of  $\Delta_{\Sedm}(\frac{\pi}{2})$ tile $\Sedm$), yields
\begin{equation} \label{Boroczky-density-bound}
\sigma_{\Sedm}\left(\frac{\pi}{2}\right)=\frac{2}{\omega_d}V_{\Sedm}\left(C_{\Sedm}\left[\x ,\frac{\pi}{4}\right]\right)
\end{equation}
for any $\x\in\Sedm$. Finally, B\"or\"oczky's upper bound theorem \cite{Bo} for the density of a spherical cap in its properly truncated spherical Voronoi cell (see pages 258-260 in \cite{Bo}) implies via a standard limiting process that
\begin{equation}\label{Boroczky-upper-bound-theorem}
\frac{V_{\Sedm}\left(C_{\Sedm}\left[\uu_{ij},\frac{\pi}{4}\right]\right)}{V_{\Sedm}\left( C_{\Sedm}\left[\uu_{ij},\arccos\frac{1}{\sqrt{d}}\right]\cap \mathbf{V}_{ij}(\Sedm)\right)}\leq 
\sigma_{\Sedm}\left(\frac{\pi}{2}\right)
\end{equation}
holds for all $1\leq i\leq n$ and $j\in T_i$. Thus, (\ref{equation-6}), (\ref{inequality-7}), (\ref{Boroczky-density-bound}), and (\ref{Boroczky-upper-bound-theorem}) yield (\ref{inequality-5}) in a straightforward way, finishing the proof of Lemma~\ref{spherical-density-bound}.
\end{proof}
We proceed by properly upper estimating ${\rm svol}_{d-1}\left({\rm bd}\left(\bigcup_{i=1}^n \B^d[\ccc_i,\sqrt{d}]\right)\right)$ for the LS-packing $\mathcal {P}=\{\B^d[\ccc_i,1]|1\leq i\leq n\}$, where $d\geq 3$ and $n>1$. First, notice that (\ref{equation-4}) in Lemma~\ref{spherical-density-bound} easily yields
$${\rm svol}_{d-1}\left({\rm bd}(\cup_{i=1}^n \B^d[\ccc_i,\sqrt{d}])\right) $$
\begin{equation}\label{inequality-8}
\leq \sum_{i=1}^{n}\left(\omega_dd^{\frac{d+1}{2}} -  V_{S^{d-1}(\ccc_i , \sqrt{d})}\left(\bigcup_{j\in T_i}C_{S^{d-1}(\ccc_i , \sqrt{d})}\left[\ccc_i+\sqrt{d}\uu_{ij},\arccos\frac{1}{\sqrt{d}}\right]\right)\right) .
\end{equation}
Second, (\ref{inequality-5}) in Lemma~\ref{spherical-density-bound} implies
$$ \sum_{i=1}^{n}V_{S^{d-1}(\ccc_i , \sqrt{d})}\left(\bigcup_{j\in T_i}C_{S^{d-1}(\ccc_i , \sqrt{d})}\left[\ccc_i+\sqrt{d}\uu_{ij},\arccos\frac{1}{\sqrt{d}}\right]\right)$$ 
\begin{equation}\label{inequality-9}
\geq \frac{\omega_d}{2V_{\Sedm}\left(C_{\Sedm}\left[\x ,\frac{\pi}{4}\right]\right)}\sum_{i=1}^{n}\sum_{j\in T_i}V_{S^{d-1}(\ccc_i , \sqrt{d})}\left(C_{S^{d-1}(\ccc_i , \sqrt{d})}\left[\ccc_i+\sqrt{d}\uu_{ij},\frac{\pi}{4}\right]\right) ,
\end{equation}
where $\x\in  \Sedm$. Finally, according to Lemma~\ref{spherical-density-bound}, the spherical caps $\left\{C_{S^{d-1}(\ccc_i , \sqrt{d})}\left[\ccc_i+\sqrt{d}\uu_{ij},\frac{\pi}{4}\right] \bigg| j\in T_i\right\}$ form a packing in $S^{d-1}(\ccc_i , \sqrt{d})$ for all $1\leq i\leq n$ and so,
\begin{equation}\label{equality-10}
\sum_{i=1}^{n}\sum_{j\in T_i}V_{S^{d-1}(\ccc_i , \sqrt{d})}\left(C_{S^{d-1}(\ccc_i , \sqrt{d})}\left[\ccc_i+\sqrt{d}\uu_{ij},\frac{\pi}{4}\right]\right)=2d^{\frac{d-1}{2}}V_{\Sedm}\left(C_{\Sedm}\left[\x ,\frac{\pi}{4}\right]\right)c(\mathcal {P}) 
\end{equation}
holds for any $\x\in  \Sedm$. Hence, (\ref{inequality-8}), (\ref{inequality-9}), and (\ref{equality-10}) yield
\begin{equation}\label{inequality-11}
{\rm svol}_{d-1}\left({\rm bd}(\cup_{i=1}^n \B^d[\ccc_i,\sqrt{d}])\right)\leq \omega_dd^{\frac{d+1}{2}}n- \omega_dd^{\frac{d-1}{2}}c(\mathcal {P}) .
\end{equation}

Finally, as a last step in the proof of Theorem~\ref{Main-theorem}, we combine (\ref{inequality-3}) and (\ref{inequality-11}) to get 
\begin{equation}\label{inequality-12}
\left(d\omega_d\hat{\delta}_d^{-\frac{d-1}{d}}\right)n^{\frac{d-1}{d}}\leq \omega_dd^{\frac{d+1}{2}}n- \omega_dd^{\frac{d-1}{2}}c(\mathcal {P}) ,
\end{equation}
from which (\ref{Main-1}) follows in a straightforward way. This completes the proof of Theorem~\ref{Main-theorem}.

\section{Proof of Theorem~\ref{2D-theorem}}

We modify Harborth's method (\cite{Har}) somewhat and apply it to locally separable congruent disk packings as follows. Let $\mathcal {P}$ be an arbitrary LS-packings of $n>1$ unit diameter disks in $\Ee^2$. Moreover, let $G_c(\mathcal {P})$ denote the contact graph of $\mathcal {P}$ embedded in $\Ee^2$ such that the vertices are the center points of the unit diameter disks of $\mathcal {P}$ and the edges are line segments of unit length each connecting two center points if and only if the unit diameter disks centered at those two points touch each other. 
Our goal is to show by induction on $n$ that 
\begin{equation}\label{inequality-13}
c(\mathcal {P})\leq \lfloor 2n-2\sqrt{n}\rfloor .
\end{equation}
It is easy to check that (\ref{inequality-13}) holds for $n=2, 3$. So, by induction we assume that (\ref{inequality-13}) holds for any LS-packing of at most $n-1$ unit diameter disks in $\Ee^2$, where $n-1\geq 3$. Next, let $\mathcal {P}$ be an arbitrary  LS-packing of $n\geq 4$ unit diameter disks in $\Ee^2$. We claim that 
one may assume that $G_c(\mathcal {P})$ is $2$-connected (and therefore the degree of each vertex of $G_c(\mathcal {P})$ is at least $2$). Namely, assume that $G_c(\mathcal {P})$ possesses two subgraphs $G_1$ and $G_2$ with only one vertex in common and with $n_1\geq 2$ and $n_2\geq 2$ vertices, respectively. Then, $n+1=n_1+n_2$, and by induction the number of edges of $G_c(\mathcal {P})$ is at most $(2n_1-2\sqrt{n_1})+(2n_2-2\sqrt{n_2})$, which can be easily upper bounded by $2n-2\sqrt{n}$ implying (\ref{inequality-13}) in a straightforward way. Thus, from this point on we assume that $G_c(\mathcal {P})$ is a $2$-connected planar graph having $c_{\rm LS}(n)$ edges, where $c_{\rm LS}(n)$ stands for the largest number of edges of contact graphs of LS-packings of $n$ unit diameter disks in $\Ee^2$. Hence, every face of $G_c(\mathcal {P})$ - including the external one - is bounded by a cycle. 

Let $P$ be the simple closed polygon that bounds the external face of $G_c(\mathcal {P})$ and let $v$ denote the number of vertices of $P$. As $\mathcal {P}$ is an LS-packing therefore the degree of every vertex of $G_c(\mathcal {P})$ is at most $4$ and in particular, the measure of the internal angle of $P$ at a vertex of degree $j$ is at least as large as $\frac{(j-1)\pi}{2}$, where $2\leq j\leq 4$. If $v_j$ denotes the number of vertices of $P$ of degree $j$ for $2\leq j\leq 4$, then
\begin{equation}\label{equality-14}
v=v_2+v_3+v_4 .
\end{equation}
Moreover, as the sum of the measures of the internal angles of $P$ is equal to $(v-2)\pi$ therefore $\sum_{j=2}^{4}v_j\frac{(j-1)\pi}{2}\leq (v-2)\pi$ implying that
\begin{equation}\label{inequality-15}
v_2+2v_3+3v_4\leq 2v-4 .
\end{equation}
Next, delete the vertices of $P$ from the vertices of $G_c(\mathcal {P})$ together with the edges that are incident to them. As a result we get that $c_{\rm LS}(n)-v-(v_3+2v_4))\leq c_{\rm LS}(n-v)$, which together with (\ref{equality-14}) and (\ref{inequality-15}) imply that
\begin{equation}\label{inequality-16}
c_{\rm LS}(n)\leq c_{\rm LS}(n-v)+2v-4 .
\end{equation}
Finally, (\ref{inequality-16}) combined with the induction hypothesis $c_{\rm LS}(n-v)\leq 2(n-v)-2\sqrt{n-v}$ yields
\begin{equation}\label{inequality-17}
c_{\rm LS}(n)\leq (2n-4)-2\sqrt{n-v} .
\end{equation}

In order to finish our inductive proof we are going to upper bound the right-hand side of (\ref{inequality-17}) by lower bounding $n-v$ as follows. Let $f_k$ denote the number of internal faces of $G_c(\mathcal {P})$ that have $k$ sides each having unit length. (Recall that an internal face means a face of $G_c(\mathcal {P})$ different from its external face.) As $\mathcal {P}$ is an LS-packing therefore $k\geq 4$. Euler's formula for $G_c(\mathcal {P})$ implies that
\begin{equation}\label{equality-18}
f_4+f_5+\dots =c_{\rm LS}(n)-n+1
\end{equation}
On the other hand, it is obvious that
\begin{equation}\label{inequality-19}
4(f_4+f_5+\dots )\leq 4f_4+5f_5+\dots =v+2(c_{\rm LS}(n)-v) .
\end{equation} 
Hence, (\ref{equality-18}) and (\ref{inequality-19}) yield
\begin{equation}\label{inequality-20}
2c_{\rm LS}(n)-3n+4\leq n-v ,
\end{equation}
which is the desired lower bound for $n-v$. 

Now, (\ref{inequality-17}) and (\ref{inequality-20}) imply $c_{\rm LS}(n)\leq (2n-4)-2\sqrt{2c_{\rm LS}(n)-3n+4}$, which is equivalent to
\begin{equation}\label{inequality-21}
c^2_{\rm LS}(n)-4n c_{\rm LS}(n)+(4n^2-4n)\geq 0 .
\end{equation}
As the roots of the quadratic equation $x^2-4nx+(4n^2-4n)=0$ are $x_{1,2}=2n\pm 2\sqrt{n}$ and $c_{\rm LS}(n)<2n$, it follows that $c_{\rm LS}(n)\leq 2n-2\sqrt{n}$, finishing the proof of Theorem~\ref{2D-theorem}.

\section{Proof of Theorem~\ref{2D-crystallization}}

The following case analytic proof of Theorem~\ref{2D-crystallization} is based on the above proof of Theorem~\ref{2D-theorem}. We start with

\begin{proposition}\label{proposed-approach-1}
Let $\mathcal {P}$ be an LS-packing of $n>1$ unit diameter disks with $c(\mathcal {P})= \lfloor 2n-2\sqrt{n}\rfloor$ such that $G_c(\mathcal {P})$ is $2$-connected. If the simple closed polygon that bounds the external face of $G_c(\mathcal {P})$ has $v$ vertices, then the inequalities
\begin{equation}\label{inequality-22}
c(\mathcal {P})+1\leq (2n-4)-2\sqrt{n-v} ,
\end{equation}
\begin{equation}\label{inequality-23}
2(c(\mathcal {P})+1)-3n+4\leq n-v
\end{equation}
cannot hold simultaneously.
\end{proposition}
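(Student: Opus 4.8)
The plan is to argue by contradiction, essentially replaying the endgame of the proof of Theorem~\ref{2D-theorem} with $c(\mathcal{P})+1$ in place of $c(\mathcal{P})$, and to observe that the resulting quadratic estimate just barely fails. So, assume both (\ref{inequality-22}) and (\ref{inequality-23}) hold, and abbreviate $c:=c(\mathcal{P})=\lfloor 2n-2\sqrt{n}\rfloor$; note that the vertices of $P$ are among the $n$ vertices of $G_c(\mathcal{P})$, so $v\le n$ and $n-v\ge 0$.

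First I would rewrite the two inequalities in a form ready to be combined. From (\ref{inequality-22}) we get $2\sqrt{n-v}\le 2n-5-c$; since the left-hand side is nonnegative, this also forces $2n-5-c\ge 0$, so squaring is legitimate and yields $4(n-v)\le(2n-5-c)^2$. From (\ref{inequality-23}) we get $n-v\ge 2c-3n+6$, hence $4(n-v)\ge 8c-12n+24$. Chaining these two gives $8c-12n+24\le(2n-5-c)^2$.

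Next I would expand $(2n-5-c)^2=c^2-(4n-10)c+4n^2-20n+25$ and collect terms, so that the inequality becomes
\[
c^2-(4n-2)c+(4n^2-8n+1)\ge 0 .
\]
The quadratic $x^2-(4n-2)x+(4n^2-8n+1)$ has discriminant $(4n-2)^2-4(4n^2-8n+1)=16n$, so its roots are $2n-1\pm 2\sqrt{n}$; therefore the displayed inequality forces either $c\le 2n-1-2\sqrt{n}$ or $c\ge 2n-1+2\sqrt{n}$. Now $c=\lfloor 2n-2\sqrt{n}\rfloor>(2n-2\sqrt{n})-1=2n-1-2\sqrt{n}$, which rules out the first alternative, while $c\le 2n-2\sqrt{n}<2n<2n-1+2\sqrt{n}$ (the last step because $2\sqrt{n}>1$ for $n>1$), which rules out the second. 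This contradiction shows that (\ref{inequality-22}) and (\ref{inequality-23}) cannot hold simultaneously.

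I do not expect a genuine obstacle here: the argument is a short computation. The only points requiring care are verifying the sign condition $2n-5-c\ge 0$ that makes the squaring step valid — which falls out of (\ref{inequality-22}) itself since its right-hand side dominates a nonnegative quantity — and using the strict inequality $\lfloor x\rfloor>x-1$ (rather than $\lfloor x\rfloor\ge x-1$) so that the first alternative yields a strict contradiction. Note that $2$-connectedness of $G_c(\mathcal{P})$ is not actually used in this step beyond $v\le n$; it is what makes the input inequalities (\ref{inequality-22}) and (\ref{inequality-23}) relevant in the surrounding case analysis.
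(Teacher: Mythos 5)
Your proposal is correct and follows essentially the same route as the paper: both combine (\ref{inequality-22}) and (\ref{inequality-23}) into the quadratic inequality $c^2(\mathcal{P})-(4n-2)c(\mathcal{P})+(4n^2-8n+1)\geq 0$, identify the roots $2n-1\pm 2\sqrt{n}$, and contradict $c(\mathcal{P})=\lfloor 2n-2\sqrt{n}\rfloor$. Your handling of the squaring step and the floor estimate $\lfloor x\rfloor>x-1$ just makes explicit details the paper leaves implicit.
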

\begin{proof} Assume (\ref{inequality-22}) and (\ref{inequality-23}). Then it follows that $c(\mathcal {P})+1\leq (2n-4)-2\sqrt{2(c(\mathcal {P})+1)-3n+4}$, which is equivalent to
\begin{equation}\label{inequality-24}
c^2(\mathcal {P})-(4n-2)c(\mathcal {P})+(4n^2-8n+1)\geq 0 .
\end{equation}
As the roots of the quadratic equation $x^2-(4n-2)x+(4n^2-8n+1)=0$ are $x_{1,2}=2n\pm 2\sqrt{n}-1$ and $c(\mathcal {P})<2n$, it follows that $c(\mathcal {P})\leq 2n-2\sqrt{n}-1=\lfloor 2n-2\sqrt{n}\rfloor-1$, a contradiction. This completes the proof of Proposition~\ref{proposed-approach-1}. \end{proof}

Clearly, Proposition~\ref{proposed-approach-1}, (\ref{inequality-17}), and (\ref{inequality-20}) yield
\begin{corollary}\label{proposed-approach-2}
Let $\mathcal {P}$ be an LS-packing of $n>1$ unit diameter disks with $c(\mathcal {P})= \lfloor 2n-2\sqrt{n}\rfloor$ such that $G_c(\mathcal {P})$ is $2$-connected. If the simple closed polygon that bounds the external face of $G_c(\mathcal {P})$ has $v$ vertices, then either
\begin{equation}\label{equation-25}
c(\mathcal {P})=\lfloor (2n-4)-2\sqrt{n-v}\rfloor \ {\text or}
\end{equation}
\begin{equation}\label{equation-26}
2c(\mathcal {P})-3n+4=(n-v)-1 \ or
\end{equation}
\begin{equation}\label{equation-27}
2c(\mathcal {P})-3n+4=n-v .
\end{equation}
\end{corollary}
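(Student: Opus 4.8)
The plan is to combine the two inequalities (\ref{inequality-17}) and (\ref{inequality-20}) with the ``no simultaneous failure'' statement of Proposition~\ref{proposed-approach-1}, using only the integrality of $c(\mathcal{P})$ and of $n-v$. First I would note that (\ref{inequality-17}) and (\ref{inequality-20}) are available here: they were derived in Section~3 under exactly the hypotheses now assumed (namely that $G_c(\mathcal{P})$ is $2$-connected and that $\mathcal{P}$ realizes the maximum contact number, so that $c_{\rm LS}(n)=\lfloor 2n-2\sqrt{n}\rfloor=c(\mathcal{P})$), with $v$ the number of vertices of the polygon bounding the external face of this particular $G_c(\mathcal{P})$. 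Thus we may read them as $c(\mathcal{P})\le (2n-4)-2\sqrt{n-v}$ and $2c(\mathcal{P})-3n+4\le n-v$.

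Next I would carry out a short case split on the nonnegative integer gap $g:=(n-v)-\bigl(2c(\mathcal{P})-3n+4\bigr)\ge 0$, which is an integer because $n$, $v$, $c(\mathcal{P})$ all are. If $g=0$ then $2c(\mathcal{P})-3n+4=n-v$, which is (\ref{equation-27}). If $g=1$ then $2c(\mathcal{P})-3n+4=(n-v)-1$, which is (\ref{equation-26}). The only remaining possibility is $g\ge 2$, i.e. $2\bigl(c(\mathcal{P})+1\bigr)-3n+4\le n-v$, which is precisely inequality (\ref{inequality-23}).

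In that last case I would invoke Proposition~\ref{proposed-approach-1}: since (\ref{inequality-23}) holds, inequality (\ref{inequality-22}) must fail, so $c(\mathcal{P})+1>(2n-4)-2\sqrt{n-v}$. Together with (\ref{inequality-17}) this pins $c(\mathcal{P})$ into the half-open interval $\bigl((2n-4)-2\sqrt{n-v}-1,\ (2n-4)-2\sqrt{n-v}\,\bigr]$, which has length one and hence contains exactly one integer, namely $\lfloor (2n-4)-2\sqrt{n-v}\rfloor$. Therefore $c(\mathcal{P})=\lfloor (2n-4)-2\sqrt{n-v}\rfloor$, which is (\ref{equation-25}). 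Since the three cases $g=0$, $g=1$, $g\ge 2$ are exhaustive, this proves the corollary.

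The argument is essentially bookkeeping once Proposition~\ref{proposed-approach-1} is granted; there is no genuine obstacle. The only points needing a little care are verifying that (\ref{inequality-17}) and (\ref{inequality-20}) legitimately transfer from the proof of Theorem~\ref{2D-theorem} (they do, since that derivation was made for an arbitrary $2$-connected extremal contact graph), and checking that the length-one interval in the last case correctly identifies the floor in (\ref{equation-25}).
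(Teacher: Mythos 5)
Your argument is correct and is exactly the route the paper takes: it combines (\ref{inequality-17}) and (\ref{inequality-20}) (which indeed remain valid for any extremal $2$-connected $G_c(\mathcal {P})$) with Proposition~\ref{proposed-approach-1}, the only content being the integer case split on $(n-v)-(2c(\mathcal {P})-3n+4)$ and the length-one-interval observation identifying the floor in (\ref{equation-25}). The paper leaves these bookkeeping details implicit ("Clearly, \dots yield"), so your write-up just makes the same proof explicit.
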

Now, the discussion that leads to (\ref{inequality-20}) in the proof of Theorem~\ref{2D-theorem} easily implies that if (\ref{equation-27}) holds, then the internal faces of $G_c(\mathcal {P})$ are quadrilaterals having sides of length $1$. Moreover, since $\mathcal {P}$ is an LS-packing therefore it follows that the internal angles of any quadrilateral face are all $\geq\frac{\pi}{2}$ and therefore they are all of measure $\frac{\pi}{2}$. This implies that the quadrilateral faces are all unit squares. Hence, it follows that the vertices of $G_c(\mathcal {P})$ belong to an isometric copy of the integer lattice $ \Ze^2$ in $\Ee^2$ such that Case (i) of Theorem~\ref{2D-crystallization} holds. Next assume that (\ref{equation-26}) holds. This time, the discussion right before (\ref{inequality-20}) in the proof of Theorem~\ref{2D-theorem} implies that the internal faces of $G_c(\mathcal {P})$ are quadrilaterals except one, which is a pentagon labelled by ${\bf P}_0$. Since $\mathcal {P}$ is an LS-packing therefore it follows that the quadrilaterals are unit squares and the sides of ${\bf P}_0$ have length $1$ and its internal angles are all $\geq \frac{\pi}{2}$.
 
\begin{claim}\label{one-internal-vertex}
Let ${\bf x}$ be a vertex of ${\bf P}_0$ which is an internal vertex of $G_c(\mathcal {P})$ (i.e., not a vertex of the external face of $G_c(\mathcal {P})$). Then the internal angle of ${\bf P}_0$ at ${\bf x}$ is of measure $\frac{\pi}{2}$ and there are three unit square faces of $G_c(\mathcal {P})$ meeting at ${\bf x}$.
\end{claim}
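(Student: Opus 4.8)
The plan is to work entirely locally at ${\bf x}$, reduce by an angle count to a single bad configuration, and then destroy that configuration using the fact that ${\bf P}_0$ has only unit‑length sides. First I record the shape of ${\bf P}_0$: its five interior angles sum to $3\pi$ and each is $\ge\tfrac\pi2$, hence each lies in $[\tfrac\pi2,\pi]$ and ${\bf P}_0$ is convex. Since ${\bf x}$ is an internal vertex, every face incident to ${\bf x}$ is an internal face, and exactly one of them is ${\bf P}_0$ — at least one because ${\bf x}\in{\bf P}_0$, and at most one because (we are in the $2$‑connected case, so) every face is bounded by a simple cycle. All other incident faces are unit squares, each contributing $\tfrac\pi2$ to the total angle $2\pi$ around ${\bf x}$. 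Writing $j:=\deg({\bf x})\in\{2,3,4\}$ (the bound $j\le4$ for LS‑packings was established in the proof of Theorem~\ref{2D-theorem}), the interior angle $\alpha_{\bf x}$ of ${\bf P}_0$ at ${\bf x}$ satisfies $\alpha_{\bf x}+(j-1)\tfrac\pi2=2\pi$, i.e.\ $\alpha_{\bf x}=(5-j)\tfrac\pi2$. As $\alpha_{\bf x}\le\pi$, the case $j=2$ (which would give $\alpha_{\bf x}=\tfrac{3\pi}{2}$) is impossible, while $j=4$ gives $\alpha_{\bf x}=\tfrac\pi2$ together with three unit‑square incident faces, which is exactly the assertion. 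So the entire task reduces to ruling out $j=3$, i.e.\ $\alpha_{\bf x}=\pi$.

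Assume $\alpha_{\bf x}=\pi$. Then the two ${\bf P}_0$‑edges at ${\bf x}$ are collinear; fix coordinates with ${\bf x}=\oo$, the two ${\bf P}_0$‑neighbours of ${\bf x}$ equal to $\aaa=(-1,0)$ and $\bbb=(1,0)$, the third edge at ${\bf x}$ running to $\ccc=(0,1)$, and ${\bf P}_0$ lying locally in the lower half‑plane. The two unit‑square faces at ${\bf x}$ are then forced to be $\conv\{\oo,\aaa,\ccc,(-1,1)\}$ and $\conv\{\oo,\bbb,\ccc,(1,1)\}$, so $\aaa$ is a vertex of the first and $\bbb$ of the second. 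I will use two rigidity facts: (1) no two \emph{adjacent} vertices of ${\bf P}_0$ are flat (angle $\pi$), for otherwise ${\bf P}_0$ would degenerate to a triangle with a side of length $\ge3$, violating the triangle inequality; and (2) if $\aaa$ is itself an internal vertex, then repeating the angle count at $\aaa$ and invoking (1) forces $\deg(\aaa)=4$ and $\alpha_\aaa=\tfrac\pi2$, so the second ${\bf P}_0$‑edge at $\aaa$ is perpendicular to $\aaa{\bf x}$ and, being on the ${\bf P}_0$‑side, runs to $(-1,-1)$; symmetrically at $\bbb$.

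The endgame is a split on how many of $\aaa,\bbb$ are internal. If both are internal, then ${\bf P}_0=\conv\{(-1,-1),(-1,0),(0,0),(1,0),(1,-1)\}$, whose boundary edge from $(1,-1)$ to $(-1,-1)$ has length $2$, contradicting that all sides of ${\bf P}_0$ are unit. If exactly one of them, say $\aaa$, is internal, the remaining fifth vertex $\q$ of ${\bf P}_0$ must satisfy $\|\q-(-1,-1)\|=\|\q-\bbb\|=1$ while $\|(-1,-1)-\bbb\|=\sqrt5>2$, impossible. If both $\aaa,\bbb$ are external, write $\aaa',\bbb'$ for the other two vertices of ${\bf P}_0$; since ${\bf x}$ is flat, the region ${\bf P}_0$ is the quadrilateral $\aaa'\aaa\bbb\bbb'$ with consecutive side lengths $1,2,1,1$, and its four angles are precisely the four non‑flat angles of ${\bf P}_0$, each $\ge\tfrac\pi2$ and summing to $3\pi-\pi=2\pi$, hence each equal to $\tfrac\pi2$; so $\aaa'\aaa\bbb\bbb'$ is a rectangle, forcing its opposite sides $\aaa\bbb$ (length $2$) and $\bbb'\aaa'$ (length $1$) to be equal — a contradiction. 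Hence $j=3$ is impossible, $\deg({\bf x})=4$, and the claim follows.

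The only real friction I anticipate is the bookkeeping of the last two paragraphs: correctly pinning the positions of the forced vertices and keeping the orientation "${\bf P}_0$ lies below'' consistent when the argument is transported from ${\bf x}$ to $\aaa$ and $\bbb$. Every individual contradiction is a one‑line distance or rectangle computation, so the efficient setup is to prove once, up front, that an internal vertex of ${\bf P}_0$ has degree $3$ or $4$ with ${\bf P}_0$‑angle $\pi$ or $\tfrac\pi2$ respectively, that a flat internal vertex of ${\bf P}_0$ has its two ${\bf P}_0$‑neighbours collinear with it at distance $2$, and that two flat vertices of ${\bf P}_0$ are never adjacent, and then to feed these three facts mechanically into the case split.
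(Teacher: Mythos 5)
Your proof is correct and takes essentially the paper's approach: count the unit-square faces meeting at the internal vertex ${\bf x}$ (equivalently $\deg({\bf x})\in\{2,3,4\}$), use that every angle of ${\bf P}_0$ lies in $[\frac{\pi}{2},\pi]$ to exclude the one-square case, and then exclude the two-square (flat) case, which the paper dismisses in one line via the angle/side-length constraints on ${\bf P}_0$. The only divergence is that your three-way case split on the neighbours of ${\bf x}$ in the flat case is sound but unnecessary: the rectangle argument from your last subcase (the four non-flat angles of ${\bf P}_0$ are each $\geq\frac{\pi}{2}$ and sum to $2\pi$, hence all equal $\frac{\pi}{2}$, so ${\bf P}_0$ is geometrically a rectangle with opposite sides of lengths $2$ and $1$) applies verbatim whether or not those neighbours are internal, and it is precisely the rigorous form of the paper's ``side whose length is larger than $1$'' contradiction.
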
 
\begin{proof}
$G_c(\mathcal {P})$ has either one or two or three square faces meeting at ${\bf x}$. The first two cases are not possible because then either the sum of the measures of the internal angles of ${\bf P}_0$ is larger than $3\pi$ or ${\bf P}_0$ has a side whose length is larger than $1$. Thus, Claim~\ref{one-internal-vertex} follows.
\end{proof}
\begin{claim}\label{two-internal-vertices}
${\bf P}_0$ cannot have two adjacent vertices which are both internal vertices of $G_c(\mathcal {P})$.
\end{claim}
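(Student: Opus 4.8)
The plan is to argue by contradiction, exploiting the rigidity forced by the unit side lengths together with the right angles supplied by Claim~\ref{one-internal-vertex}. Suppose $\mathbf{x}$ and $\mathbf{y}$ are two adjacent vertices of $\mathbf{P}_0$ that are both internal vertices of $G_c(\mathcal{P})$; then $\mathbf{x}\mathbf{y}$ is a side of $\mathbf{P}_0$, hence has length $1$. Write the vertices of $\mathbf{P}_0$ in cyclic order as $\mathbf{x}',\mathbf{x},\mathbf{y},\mathbf{y}',\mathbf{z}$, where $\mathbf{x}'$ (resp.\ $\mathbf{y}'$) is the neighbour of $\mathbf{x}$ (resp.\ $\mathbf{y}$) on $\mathbf{P}_0$ different from $\mathbf{y}$ (resp.\ $\mathbf{x}$). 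By Claim~\ref{one-internal-vertex}, the internal angle of $\mathbf{P}_0$ at $\mathbf{x}$ and at $\mathbf{y}$ equals $\frac{\pi}{2}$.

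First I would show that $\mathbf{x}',\mathbf{x},\mathbf{y},\mathbf{y}'$ are the four vertices of a unit square. The two right angles at $\mathbf{x}$ and $\mathbf{y}$, together with the fact that all sides of $\mathbf{P}_0$ have length $1$, say that $\mathbf{x}'\mathbf{x}$, $\mathbf{x}\mathbf{y}$, $\mathbf{y}\mathbf{y}'$ are three consecutive sides of a unit square; the point to check is that the two right-angle turns have the same orientation, which holds because the boundary of the face $\mathbf{P}_0$ is a simple polygon, so its interior lies locally on one fixed side of the line through $\mathbf{x}\mathbf{y}$, and hence both $\mathbf{x}'$ and $\mathbf{y}'$ lie on that same side. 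Therefore $|\mathbf{x}'-\mathbf{y}'|=1$, and since $\mathbf{x}'\mathbf{z}$ and $\mathbf{y}'\mathbf{z}$ are also sides of $\mathbf{P}_0$ and thus of length $1$, the triangle $\mathbf{x}'\mathbf{y}'\mathbf{z}$ is equilateral of side $1$. Consequently $\mathbf{P}_0$ is congruent to a unit square with an equilateral triangle attached along one side — either glued on externally or cut out of it.

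Then I would finish by a one-line angle computation in each of the two cases. If the equilateral triangle is glued on externally, the internal angle of $\mathbf{P}_0$ at $\mathbf{z}$ equals $\frac{\pi}{3}$; if it is cut out, the internal angle of $\mathbf{P}_0$ at $\mathbf{x}'$ (and at $\mathbf{y}'$) equals $\frac{\pi}{2}-\frac{\pi}{3}=\frac{\pi}{6}$. In either case $\mathbf{P}_0$ has an internal angle strictly smaller than $\frac{\pi}{2}$, contradicting the already-established fact that all internal angles of $\mathbf{P}_0$ are at least $\frac{\pi}{2}$ (a consequence of $\mathcal{P}$ being an LS-packing). This contradiction proves Claim~\ref{two-internal-vertices}.

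The only genuinely delicate step is the orientation argument in the second paragraph — i.e., verifying that $\mathbf{x}',\mathbf{x},\mathbf{y},\mathbf{y}'$ really form a convex unit square rather than a ``Z-shape''. I would spell this out by noting that the interior of $\mathbf{P}_0$ fills the $\frac{\pi}{2}$ wedge at $\mathbf{x}$ bounded by the rays $\mathbf{x}\mathbf{y}$ and $\mathbf{x}\mathbf{x}'$, so $\mathbf{x}'$ lies on the interior side of the line $\mathbf{x}\mathbf{y}$, and symmetrically $\mathbf{y}'$ lies on that same side because a simple polygon's interior cannot straddle one of its sides; everything else is elementary planar geometry.
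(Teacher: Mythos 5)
Your proof is correct and follows essentially the same route as the paper: Claim~\ref{one-internal-vertex} forces right angles at the two adjacent internal vertices, which makes $\mathbf{x}',\mathbf{x},\mathbf{y},\mathbf{y}'$ a unit square and hence $\mathbf{x}'\mathbf{y}'\mathbf{z}$ equilateral, producing an internal angle of $\mathbf{P}_0$ smaller than $\frac{\pi}{2}$ (the paper cites the $\frac{\pi}{3}$ angle at the vertex opposite the side $\mathbf{x}\mathbf{y}$), contradicting the LS-property. Your write-up merely makes explicit the orientation check and the degenerate ``notch'' placement of $\mathbf{z}$ that the paper's one-line argument leaves implicit.
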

\begin{proof}
Claim~\ref{one-internal-vertex} and the assumption that ${\bf P}_0$ possesses two adjacent vertices say, ${\bf x}$ and ${\bf y}$ which are both internal vertices of $G_c(\mathcal {P})$ forces ${\bf P}_0$ to have an internal angle of measure $\frac{\pi}{3}$ at the vertex opposite to the side spanned by ${\bf x}$ and ${\bf y}$, a contradiction.
\end{proof}


\begin{claim}\label{exactly-one-internal-vertex}
${\bf P}_0$ possesses exactly one vertex which is an internal vertex of $G_c(\mathcal {P})$.
\end{claim}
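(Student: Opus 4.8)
The plan is to rule out the case where ${\bf P}_0$ has no internal vertex of $G_c(\mathcal {P})$; combined with Claim~\ref{two-internal-vertices} (which already forbids two adjacent internal vertices) and a short counting argument this will pin the number of internal vertices of ${\bf P}_0$ at exactly one. First I would observe that, by Claim~\ref{one-internal-vertex}, every internal vertex of ${\bf P}_0$ contributes an internal angle of exactly $\frac{\pi}{2}$ to the pentagon and is surrounded by three unit squares, while every vertex of ${\bf P}_0$ lying on the external face contributes an internal angle that is either $\frac{\pi}{2}$ or $\pi$ (since the adjacent square faces each contribute $\frac{\pi}{2}$ and there can be at most $2$ of them at such a vertex, the boundary face occupying the rest). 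Since the internal angles of the pentagon ${\bf P}_0$ sum to $3\pi$ and each is at least $\frac{\pi}{2}$, the multiset of the five angles must be exactly $\{\frac{\pi}{2},\frac{\pi}{2},\frac{\pi}{2},\frac{\pi}{2},\pi\}$ — the only way to write $3\pi$ as a sum of five terms from $\{\frac{\pi}{2},\pi,\frac{3\pi}{2},\dots\}$ each at least $\frac{\pi}{2}$. So ${\bf P}_0$ is (an isometric copy of) an $L$-shaped pentagon with four right angles and one straight angle, i.e. a $2\times 1$ plus $1\times 1$ hexomino-type region, and its four right-angled vertices are the only candidates for internal vertices of $G_c(\mathcal {P})$.

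Next I would argue that ${\bf P}_0$ has at least one internal vertex. Suppose not: then all five vertices of ${\bf P}_0$ lie on the external polygon $P$ of $G_c(\mathcal {P})$. I would derive a contradiction with the structure forced by (\ref{equation-26}). Recall that under (\ref{equation-26}) all internal faces other than ${\bf P}_0$ are unit squares, and $G_c(\mathcal {P})$ is $2$-connected with minimum degree $\geq 2$. Consider the right-angled vertex ${\bf x}_0$ of ${\bf P}_0$ that is the reflex corner of the $L$ when viewed from outside — equivalently, the unique vertex of ${\bf P}_0$ at which two \emph{sides} of ${\bf P}_0$ meet at $\frac{\pi}{2}$ but with unit squares of $G_c(\mathcal {P})$ lying on \emph{both} sides of ${\bf P}_0$ locally impossible since ${\bf x}_0$ is on the boundary; more carefully, I would track the two edges of ${\bf P}_0$ emanating from an angle-$\frac{\pi}{2}$ vertex together with the (necessarily) unit-square face on the opposite side. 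The key point is that the $L$-shape has a ``notch'', and the vertex in the notch must be interior (any edge-to-edge polyomino tiling filling in around an $L$-notch forces that notch vertex to have all four quadrants occupied by faces of $G_c(\mathcal {P})$, one of which is ${\bf P}_0$ and the others unit squares, hence the notch vertex has degree $4$ and is not on $P$). Thus ${\bf P}_0$ has at least one internal vertex.

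Finally I would assemble the bounds. By the previous paragraph ${\bf P}_0$ has $\geq 1$ internal vertex; by Claim~\ref{two-internal-vertices} no two of the internal vertices of ${\bf P}_0$ are adjacent along ${\bf P}_0$. Among the five vertices of the $L$-pentagon, the four right-angled ones are the only ones that can be internal (the straight-angle vertex has a side of ${\bf P}_0$ passing straight through it, so it lies on $P$ unless... — here I would check that a straight-angle vertex of ${\bf P}_0$ is automatically on the external face, since an internal vertex of $G_c(\mathcal P)$ has all surrounding angle accounted for by faces and cannot have a single face spanning a straight angle given all faces at an internal vertex are unit squares by Claim~\ref{one-internal-vertex}). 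Of the four right-angled vertices of the $L$, the notch vertex is forced internal, and each of the other three is adjacent along ${\bf P}_0$ to the notch vertex or to another right-angled vertex; a short check of the $L$-pentagon's adjacency pattern shows that if any second vertex were internal it would be adjacent to an internal vertex, contradicting Claim~\ref{two-internal-vertices}. Hence ${\bf P}_0$ has exactly one internal vertex, proving the claim.

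The main obstacle I anticipate is the middle paragraph: making rigorous the intuitive ``an $L$-notch in an edge-to-edge unit-square polyomino forces the notch vertex to be interior'' statement. The cleanest route is probably to argue locally: look at the $\frac{\pi}{2}$-angle vertices of ${\bf P}_0$, use the fact (already available from the proof of Theorem~\ref{2D-theorem}) that all faces around an internal vertex are unit squares and that $G_c(\mathcal{P})$ is $2$-connected, and do a small finite case check on which of the five vertices of the forced $L$-shape can lie on $P$. Everything else is the angle-sum bookkeeping of the first and third paragraphs, which is routine.
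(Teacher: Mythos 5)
Your opening step is where the argument breaks down. You assert that at a vertex of ${\bf P}_0$ lying on the external face the internal angle of ${\bf P}_0$ must be $\frac{\pi}{2}$ or $\pi$, and deduce that the angle multiset is $\{\frac{\pi}{2},\frac{\pi}{2},\frac{\pi}{2},\frac{\pi}{2},\pi\}$, so that ${\bf P}_0$ is an $L$-shaped pentagon with a ``notch''. This is not justified: at a boundary vertex the angle of the external face is only bounded below by $\frac{\pi}{2}$ (by local separability), not forced to be a multiple of $\frac{\pi}{2}$, so at a vertex of degree $2$ or $3$ the pentagon's angle can take a whole interval of values. Indeed, in the extremal packings (second packing of Figure~\ref{Figure2.pdf}) the pentagon has exactly one right angle, at its internal vertex, and four non-right angles at boundary vertices; worse, an equilateral pentagon with angles $\frac{\pi}{2},\frac{\pi}{2},\frac{\pi}{2},\frac{\pi}{2},\pi$ does not exist at all, since its five unit edge vectors would point in only four pairwise orthogonal directions (one repeated) and could not sum to $\mathbf{0}$. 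Hence the $L$-shape, the ``notch vertex is forced internal'' argument, and the final adjacency check all rest on a false premise. Even granting an $L$-shape, the last step is too quick: two \emph{non-adjacent} right-angled vertices could both be internal without contradicting Claim~\ref{two-internal-vertices}, so that claim alone cannot pin the count at one.

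The missing idea is a metric argument excluding two non-adjacent internal vertices, which is how the paper proceeds: if $\x$ and $\y$ are non-adjacent internal vertices of ${\bf P}_0$ and $\z$ is their common neighbour on ${\bf P}_0$, then applying Claim~\ref{one-internal-vertex} at both $\x$ and $\y$ (angle $\frac{\pi}{2}$ and three unit squares at each) forces the internal angle of ${\bf P}_0$ at $\z$ to equal $\frac{\pi}{2}$ or $\pi$, and a direct computation then shows the side of ${\bf P}_0$ opposite to $\z$ would have length $0$ or length $>1$, contradicting that all sides of ${\bf P}_0$ have length $1$. Together with Claim~\ref{two-internal-vertices} this leaves at most one internal vertex. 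Your instinct that the zero-internal-vertex possibility should also be addressed is fair (the paper's proof passes over it), but your justification for ruling it out collapses with the $L$-shape claim, so that part would need a genuinely different argument as well.
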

\begin{proof}
Claim~\ref{two-internal-vertices} easily implies that ${\bf P}_0$ has either one vertex which is an internal vertex of $G_c(\mathcal {P})$ or it has two non-adjacent vertices say, $\x$ and $\y$ that are internal vertices of $G_c(\mathcal {P})$. Indeed, the latter case is not possible. Namely, if $\z$ denotes the vertex of ${\bf P}_0$ that is adjacent to $\x$ as well as $\y$, then applying Claim~\ref{one-internal-vertex} to $\x$ and $\y$ one obtains that the internal angle of ${\bf P}_0$ at $\z$ is either of measure $\pi$ or of measure $\frac{\pi}{2}$. Then it easily follows that the length of the side of ${\bf P}_0$ opposite to $\z$ is either of length $>1$ or it is of length $0$, a contradiction. This completes the proof of Claim~\ref{exactly-one-internal-vertex}.
\end{proof}

Finally, let $\x_0$ be the vertex of ${\bf P}_0$ that according to Claim~\ref{exactly-one-internal-vertex} is an internal vertex of $G_c(\mathcal {P})$. Then Claim~\ref{one-internal-vertex} implies that there are three unit square faces of $G_c(\mathcal {P})$ meeting at ${\bf x}_0$. It follows that the vertices of $G_c(\mathcal {P})$ with the
exception of the two vertices of ${\bf P}_0$ that are not adjacent to $\x_0$ belong to an isometric copy of the integer lattice $ \Ze^2$ in $\Ee^2$ such that Case (ii) of Theorem~\ref{2D-crystallization} holds.

It remains to investigate the case when (\ref{equation-25}) holds. Then (\ref{equality-14}), (\ref{inequality-15}), (\ref{inequality-16}), and (\ref{inequality-17}) imply that (\ref{equation-25}) holds if and only if
\newline
{\it (a)} the internal angle of $P$ at a vertex of degree $j$ is of measure $\frac{(j-1)\pi}{2}$ for all $2\leq j\leq 4$ and 
\newline
{\it (b)} the graph $G_c$ obtained from $G_c(\mathcal {P})$ by deleting the vertices of $P$ together with the edges that are incident to them, is the contact graph of an LS-packing of $n-v$ unit diameter disks having the largest number of contacts.

\begin{figure}[ht]
\begin{center}
\includegraphics[width=0.6\textwidth]{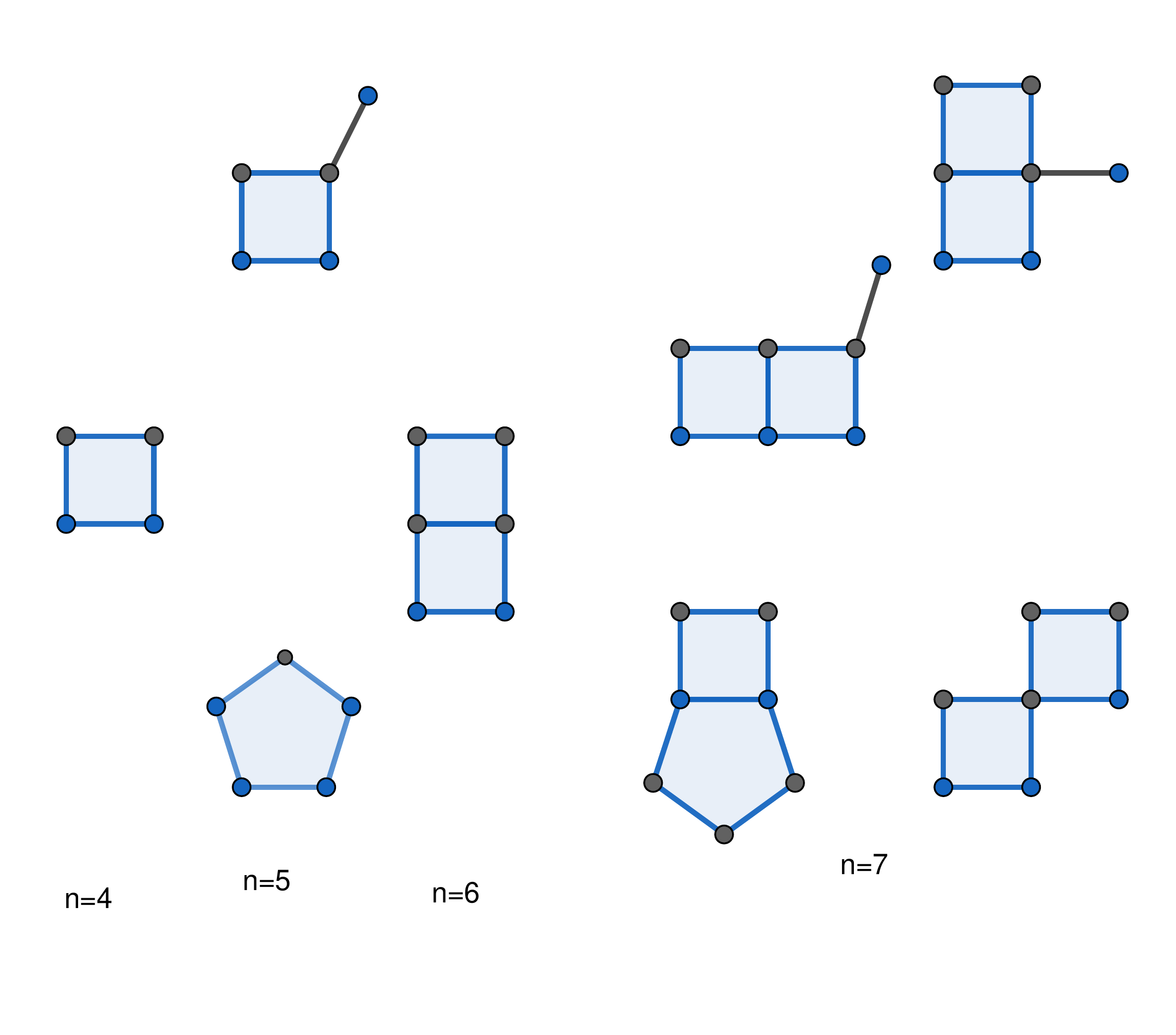}
\caption{List of non-isomorphic contact graphs of LS-packings of $n$ unit diameter disks with maximum contact numbers for $4\leq n\leq 7$.}
\label{Figure4.pdf} 
\end{center}
\end{figure}

As it is easy to check Theorem~\ref{2D-crystallization} for LS-packings of $4\leq n\leq 7$ unit diameter disks (see Figure~\ref{Figure4.pdf}) therefore from this point on we assume that $n\geq 8$. Thus, under {\it (b)} we may assume by induction that $G_c$ possesses the properties of one of the four different cases described in Theorem~\ref{2D-crystallization}. From this and {\it (a)} it follows that the internal faces of $G_c(\mathcal {P})$ are unit squares forming a polyomino of an isometric copy of $ \Ze^2$ in $\Ee^2$ such that Case (i) of Theorem~\ref{2D-crystallization} holds. This completes the proof of the crystallization of LS-packings for the cases (\ref{equation-25}), (\ref{equation-26}), and (\ref{equation-27}).

Finally, we are left to investigate the case when $\mathcal {P}$ is an LS-packing of $n\geq 8$ unit diameter disks with $c(\mathcal {P})= \lfloor 2n-2\sqrt{n}\rfloor$ such that $G_c(\mathcal {P})$ is connected but it is not $2$-connected.

We shall need the following concepts. A {\it horizontal (resp., vertical) bar of length $l\in \Ze_+$ of $ \Ze^2$} is a union of a sequence of $l$ unit squares of $ \Ze^2$ such that any two consecutive squares of the sequence share a vertical (resp., horizontal) side of length $1$ in common. A {\it rectangle of size $r_1\times r_2$ ($r_1,r_2\in\Ze_+$) of $ \Ze^2$} is a union of a sequence of $r_1$ (resp., $r_2$) vertical (resp., horizontal) bars of length $r_2$ (resp., $r_1$) of $ \Ze^2$ such that any two consecutive vertical (resp., horizontal) bars of the sequence share a vertical (resp., horizontal) side of length $r_2$ (resp., $r_1$) in common. Finally, let $N=m(m+\epsilon)+k\geq 4$ be the unique decomposition of the integer $N\geq 4$ such that $k, m$, and $\epsilon$ are integers satisfying $\epsilon\in\{0,1\}$ and $0\leq k<m+\epsilon$. (Clearly, $m\geq 2$.) Then a {\it basic polyomino spanned by $N$ integer points of $ \Ze^2$} is a union of a rectangle of size $(m-1)\times (m-1+\epsilon)$ and a vertical bar of length $k-1$ such that the top sides of the rectangle and the bar sit on the same horizontal line sharing only one point in common namely, a lattice point of $\Ze^2$ (Figure~\ref{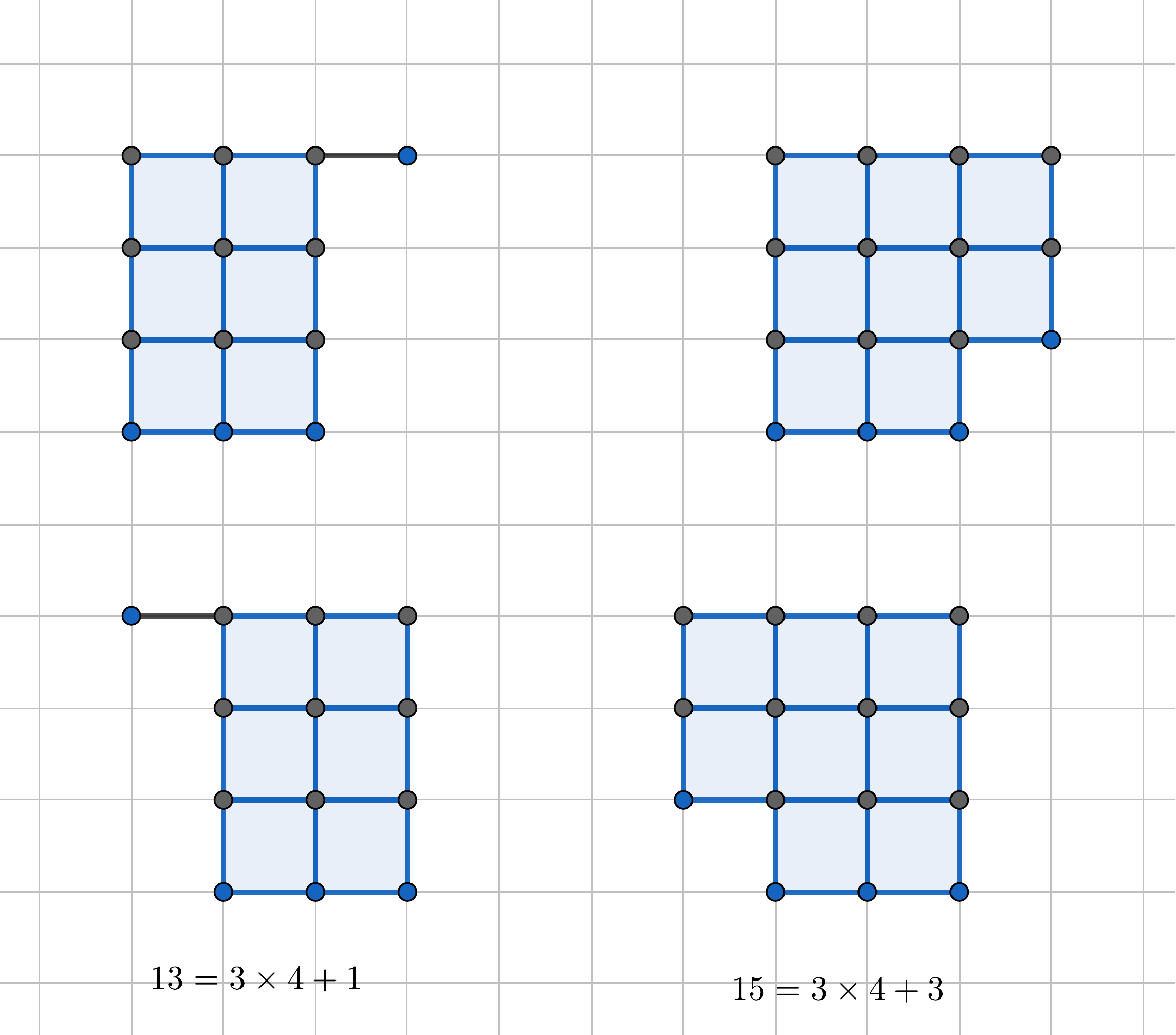}). Here, if $k=1$, then a vertical bar of length $0$ means a horizontal line segment of length $1$ and as such possesses $0$ unit square, but it has one side of length $1$. Finally, {\it the graph of a basic polyomino} has the same vertex set as the the basic polyomino, which is the set of the generating integer points of the basic polyomino and its edges are the sides of length $1$ of the basic polyomino. Now, recall the following observation from \cite{AlCe}.

\begin{claim}\label{Alonso-Cerf}
The number of edges of the graph of any basic polyomino spanned by $N\geq 4$ integer points of $ \Ze^2$ is equal to $c_{ \Ze^2}(N)=\lfloor 2N-2\sqrt{N}\rfloor$.
\end{claim}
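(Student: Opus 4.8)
The plan is to pin down the number $E$ of edges of the graph of a basic polyomino spanned by $N$ integer points by an explicit count, and then to verify the arithmetic identity $E=\lfloor 2N-2\sqrt N\rfloor$ by a short case analysis on the parameters of the decomposition $N=m(m+\epsilon)+k$; the equality $\lfloor 2N-2\sqrt N\rfloor=c_{\Ze^2}(N)$ is then supplied by (\ref{integer-lattice-2D}). Put $a:=m-1$ and $b:=m-1+\epsilon$. The rectangle of size $a\times b$ is an $(a+1)\times(b+1)$ array of lattice points, hence has $m(m+\epsilon)$ vertices and $(m-1)(m-1+\epsilon)$ unit-square faces. The vertical bar of length $k-1$ is glued on so that its length-$1$ top side extends the top side of the rectangle and meets it in a single lattice point; since $k<m+\epsilon$, the bar is strictly shorter than the vertical side of the rectangle it runs alongside, so it actually shares with the rectangle the whole vertical path of the top $k$ lattice points of that side and contributes exactly one new column of $k$ lattice points together with $\max(k-1,0)$ new unit-square faces (for $k\in\{0,1\}$ no new face appears, consistent with the convention for a ``bar of length $0$''). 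Thus the graph $G$ of the basic polyomino is connected, has $V=m(m+\epsilon)+k=N$ vertices, and by Euler's formula $E=V+f-1$, where $f=(m-1)(m-1+\epsilon)+\max(k-1,0)$ is its number of bounded faces.

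Next I would simplify $2N-E$. From $E=N+(m-1)(m-1+\epsilon)+\max(k-1,0)-1$ and the identity $m(m+\epsilon)-(m-1)(m-1+\epsilon)=2m-1+\epsilon$ one gets $2N-E=2m+\epsilon+k-\max(k-1,0)$, which equals $2m+\epsilon$ if $k=0$ and $2m+\epsilon+1$ if $k\ge 1$. Since $2N\in\Ze$, the target identity is equivalent to $2N-E=\lceil 2\sqrt N\rceil$, so it remains to check $\lceil 2\sqrt N\rceil=2m+\epsilon$ for $k=0$ and $\lceil 2\sqrt N\rceil=2m+\epsilon+1$ for $k\ge 1$. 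For $k=0$ this is elementary: $N=m^2$ gives $2\sqrt N=2m$, while $N=m(m+1)$ satisfies $m^2<N<(m+\tfrac12)^2$. For $k\ge 1$ it is equivalent to $(2m+\epsilon)^2<4N\le(2m+\epsilon+1)^2$, which, after substituting $4N=4m^2+4m\epsilon+4k$ and using $1\le k\le m+\epsilon-1$, reduces to $\epsilon^2<4k$ and $4k\le 4m+\epsilon^2+2\epsilon+1$, both obvious for $\epsilon\in\{0,1\}$.

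Finally I would note that whenever $k\ge1$, or $\epsilon=1$, the integer $N$ lies strictly between the consecutive squares $m^2$ and $(m+1)^2$, so $2\sqrt N$ is irrational and the step from $\lfloor 2N-2\sqrt N\rfloor$ to $2N-\lceil 2\sqrt N\rceil$ is legitimate; the only remaining case is $N=m^2$ ($\epsilon=k=0$), where $E=m^2+(m-1)^2-1=2m^2-2m=2N-2\sqrt N$ directly. This is bookkeeping rather than a genuine obstacle, and the one thing that requires care is reading the definition of a basic polyomino correctly: the bar must be understood to be glued along the top portion of a full edge of the rectangle, not merely at a single point of the plane, since this is exactly what makes $V=N$ and gives the face count used above. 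With $E$ computed in this form, the claim follows on combining it with (\ref{integer-lattice-2D}).
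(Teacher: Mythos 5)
Your argument is correct, but it is worth noting that the paper does not prove Claim~\ref{Alonso-Cerf} at all: it is stated as an observation recalled from \cite{AlCe}, with the identity $c_{\Ze^2}(N)=\lfloor 2N-2\sqrt{N}\rfloor$ coming from (\ref{integer-lattice-2D}). What you supply is therefore a self-contained elementary verification of the cited fact: you read off $V=N$ and the number of bounded faces $(m-1)(m-1+\epsilon)+\max(k-1,0)$ from the definition of a basic polyomino, apply Euler's formula to get $2N-E=2m+\epsilon$ for $k=0$ and $2m+\epsilon+1$ for $k\geq 1$, and then check that these values equal $\lceil 2\sqrt{N}\rceil$ by the inequalities $\epsilon^2<4k$ and $4k\leq 4m+\epsilon^2+2\epsilon+1$; I verified this arithmetic and it is sound, and your reading of the gluing (the bar shares the top $k$ lattice points of a vertical side of the rectangle, so it adds exactly one new column of $k$ points) is the intended one, consistent with Figure~\ref{Figure5.pdf}. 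Two small remarks: the closing worry about irrationality of $2\sqrt{N}$ is unnecessary, since $\lfloor a-x\rfloor=a-\lceil x\rceil$ holds for every integer $a$ and every real $x$, so the case split at the end can be dropped; and the appeal to Euler's formula tacitly uses that the only bounded faces of the embedded grid graph are the unit squares (i.e., the basic polyomino has no holes), which is clear for a rectangle with a bar glued along part of one side but could be said in a phrase, or avoided altogether by counting edges directly ($2ab+a+b$ for the rectangle plus $2k-1$ for the bar when $k\geq 1$). With either bookkeeping, your proof stands and is a useful complement to the bare citation in the paper.
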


\begin{figure}[ht]
\begin{center}
\includegraphics[width=0.6\textwidth]{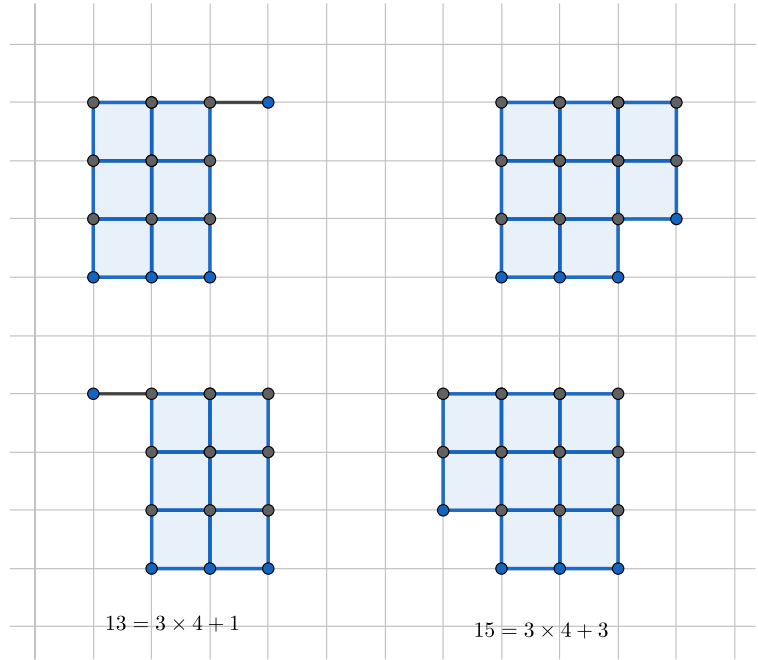}
\caption{Four basic polyominoes of $ \Ze^2$ with vertical bars of length 0 and of length 2 and their graphs.}
\label{Figure5.pdf} 
\end{center}
\end{figure}

By assumption $G_c(\mathcal {P})$ possesses two induced subgraphs $G_1$ and $G_2$ with only one vertex in common and with $n_1\geq 2$ and $n_2\geq 2$ vertices, respectively, where $n=n_1+n_2-1\geq 8$.  
Then either {\it (A)} $n_1\geq 4$ and $n_2\geq 4$, or {\it (B)} $n_1=3$, or {\it (C)} $n_2=3$, or {\it (D)} $n_1=2$, or {\it (E)} $n_2=2$.

{\it (A)}:  Let $G'_1$ (resp., $G'_2$) be the graph of a basic polyomino spanned by $n_1$ (resp., $n_2$) integer points of $\Ze^2$ such that $G'_1$ (resp., $G'_2$) lies in the first (resp., third) quadrant of the Cartesian coordinate system of $\Ee^2$ and their only vertex in common is the origin $\oo$ of $\Ee^2$. Also, we assume that the vertical bar of the basic polyomino whose graph is $G'_1$ (resp., $G'_2$) lies to the right (resp., left) of its underlying rectangle. Using Theorem~\ref{2D-theorem} and Claim~\ref{Alonso-Cerf} it follows that the number of edges of $G_1$ and $G'_1$ (resp., $G_2$ and $G'_2$) is equal to $\lfloor 2n_1-2\sqrt{n_1}\rfloor$ (resp., $\lfloor 2n_2-2\sqrt{n_2}\rfloor$) moreover,
\begin{equation}\label{equation28}
\lfloor 2n-2\sqrt{n}\rfloor=\lfloor 2n_1-2\sqrt{n_1}\rfloor+\lfloor 2n_2-2\sqrt{n_2}\rfloor
\end{equation}
It is easy to show (see Figures~\ref{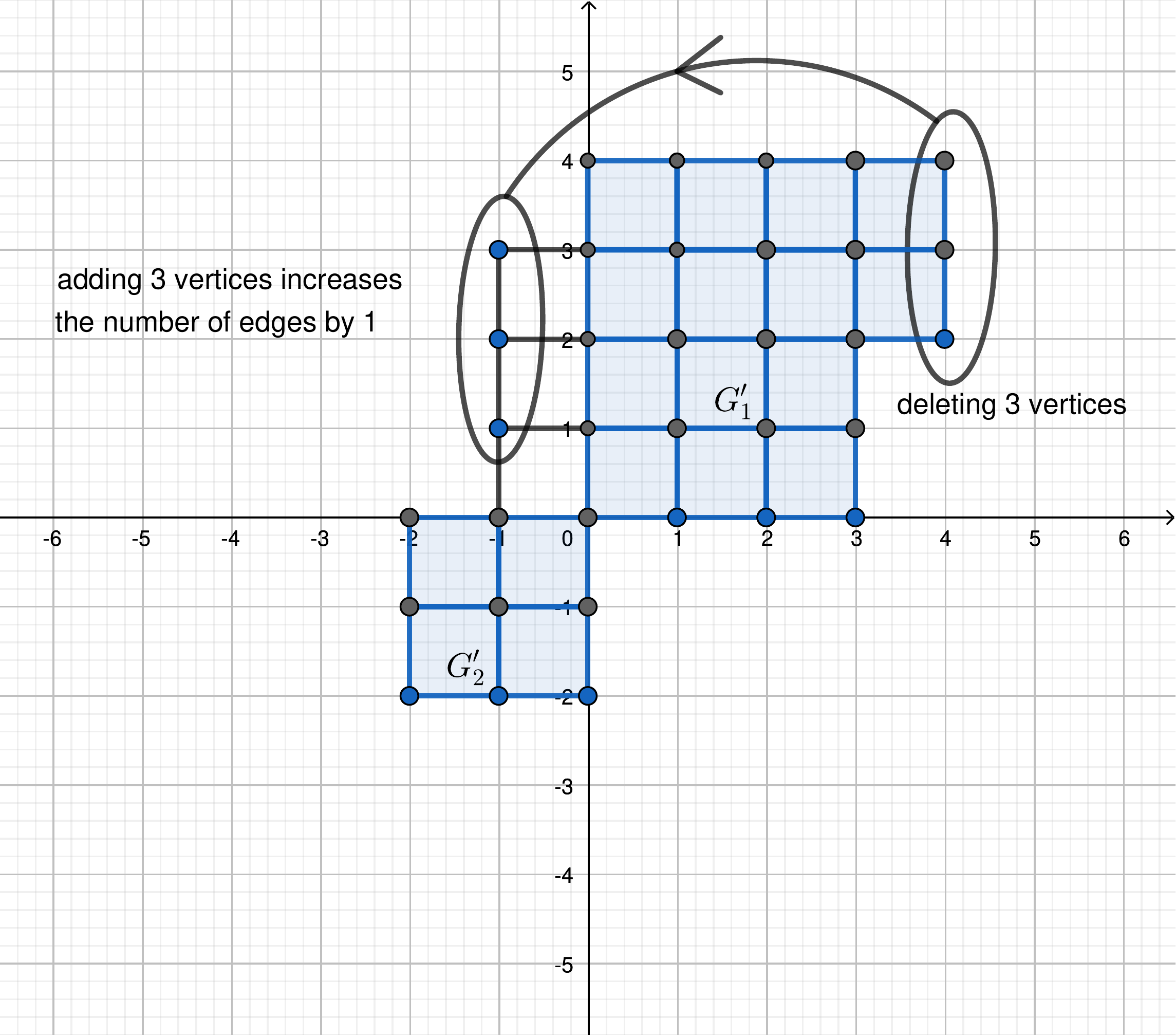},~\ref{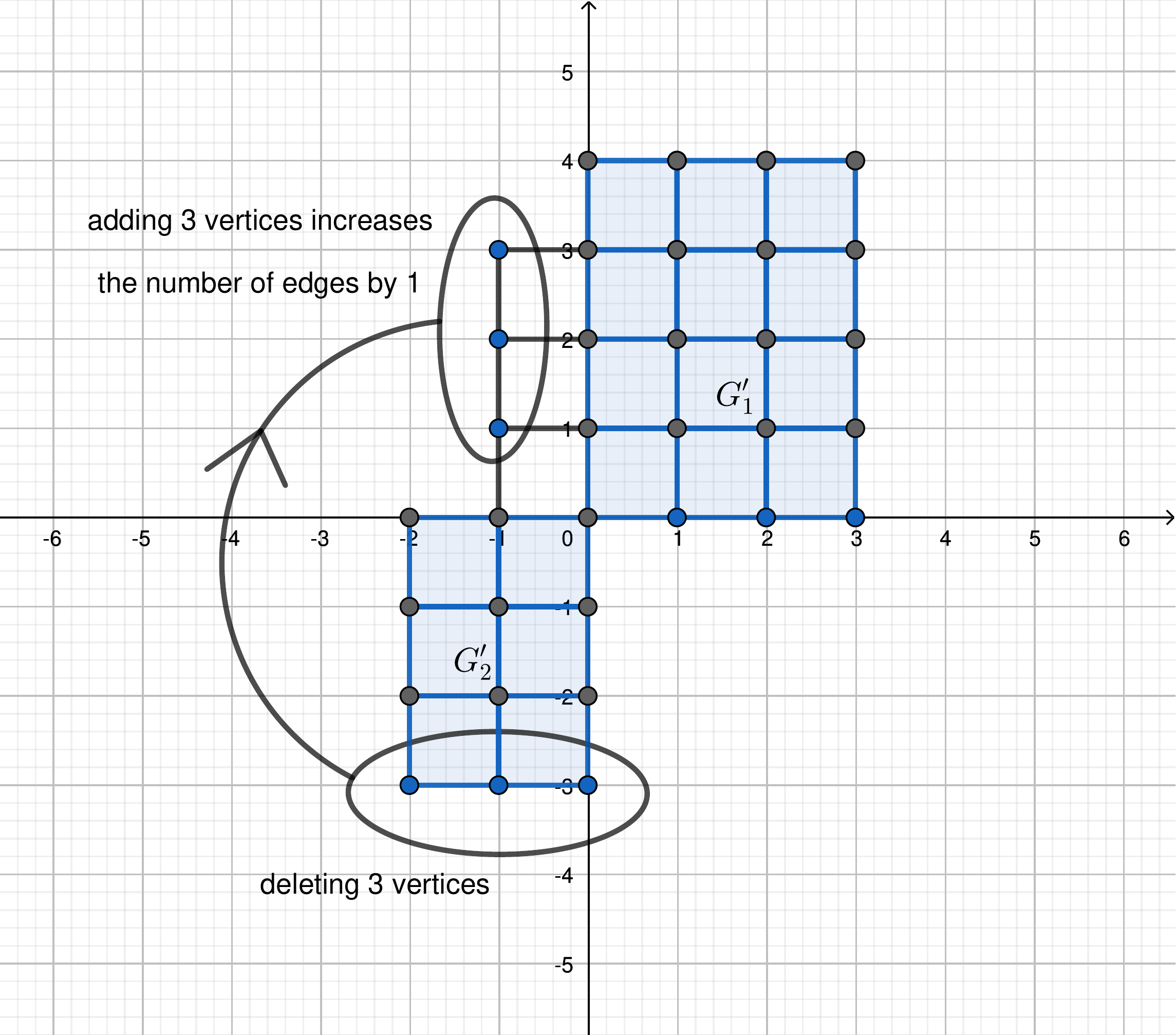} and~\ref{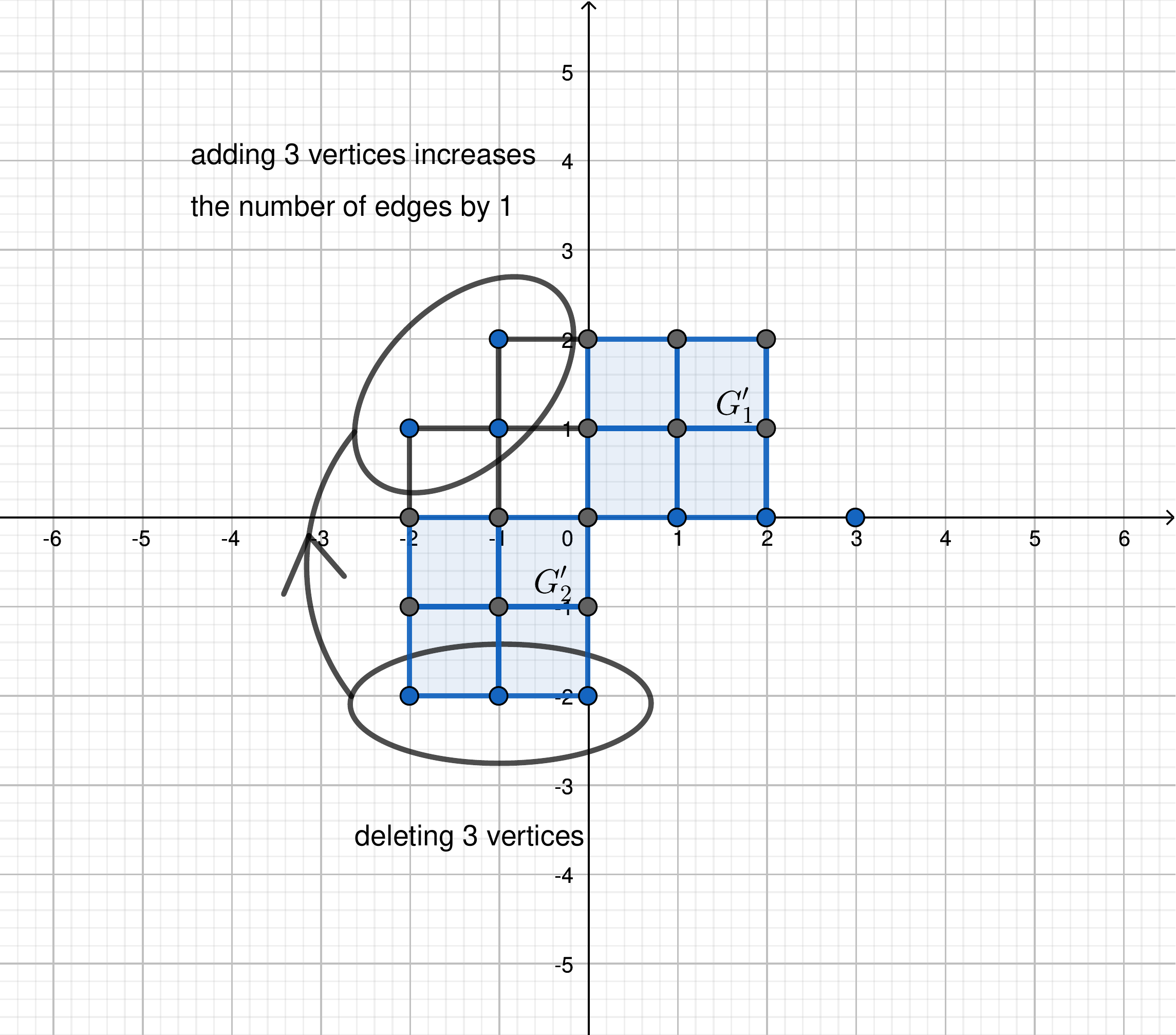}) that one can always delete some vertices of the graph $G'_1$ or $G'_2$ and add them back by choosing new positions for them such that one obtains the contact graph of an LS packing of $n$ unit diameter disks having $\lfloor 2n-2\sqrt{n}\rfloor+1$ edges guaranteed by (\ref{equation28}). This contradicts Theorem~\ref{2D-theorem}, proving that {\it (A)} cannot hold.

\begin{figure}[ht]
\begin{center}
\includegraphics[width=0.7\textwidth]{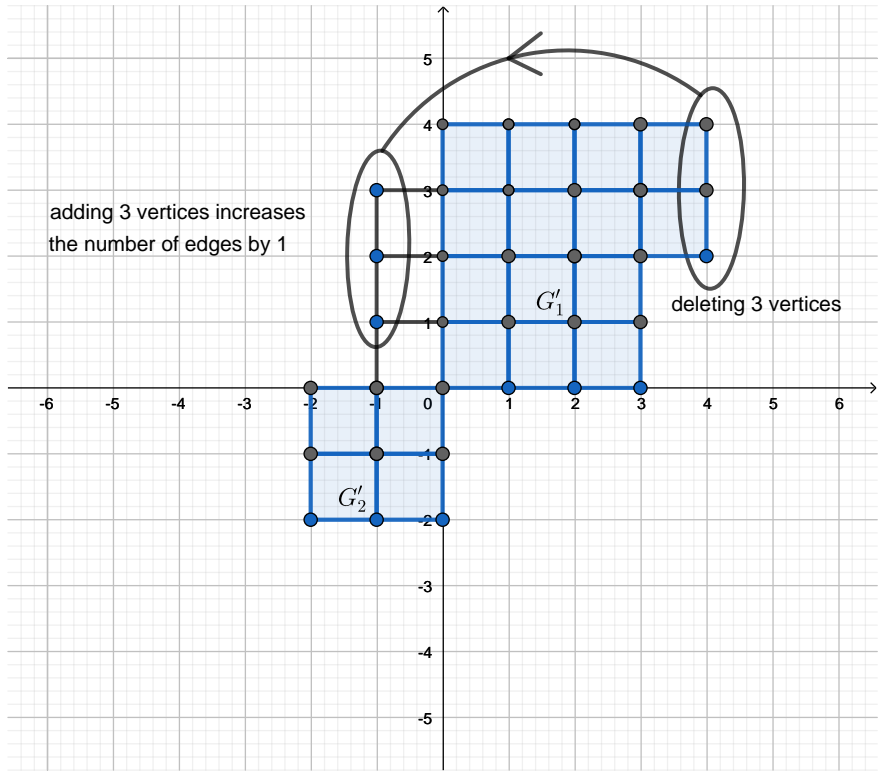}
\caption{The subcase of {\it (A)} when the basic polyomino of the graph $G'_1$ possesses a vertical bar.}
\label{Figure6.pdf} 
\end{center}
\end{figure}

\begin{figure}[ht]
\begin{center}
\includegraphics[width=0.7\textwidth]{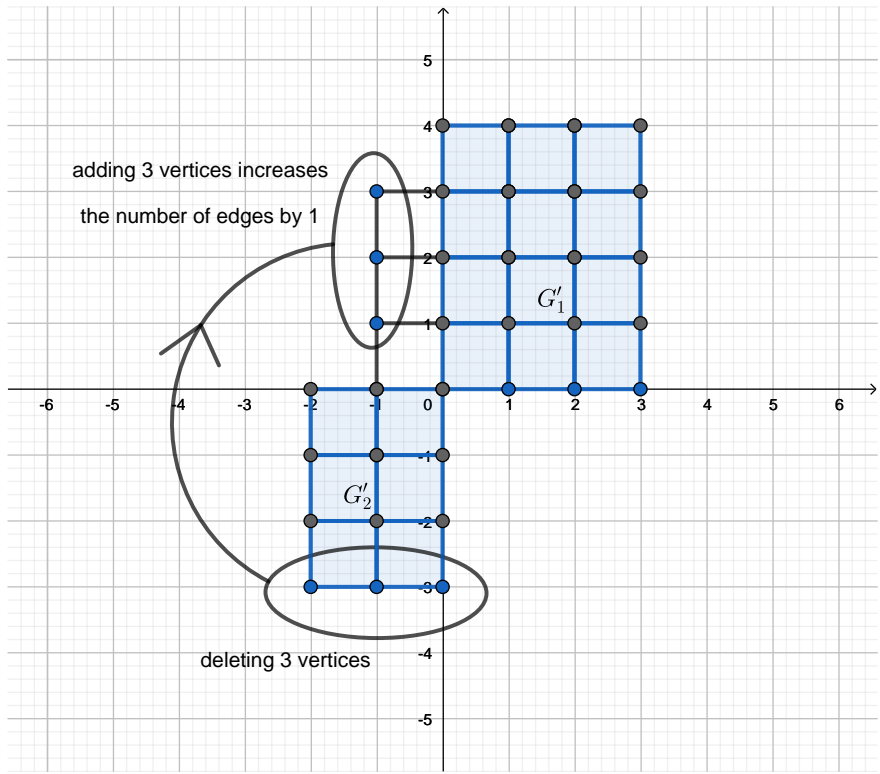}
\caption{The subcase of {\it (A)} when the basic polyominoes of the graphs $G'_1$ and $G'_2$ are rectangles. Here the vertical size of the rectangle of $G'_1$ is strictly larger than the horizontal size of the rectangle of $G'_2$.} 
\label{Figure7.pdf} 
\end{center}
\end{figure}

\begin{figure}[ht]
\begin{center}
\includegraphics[width=0.7\textwidth]{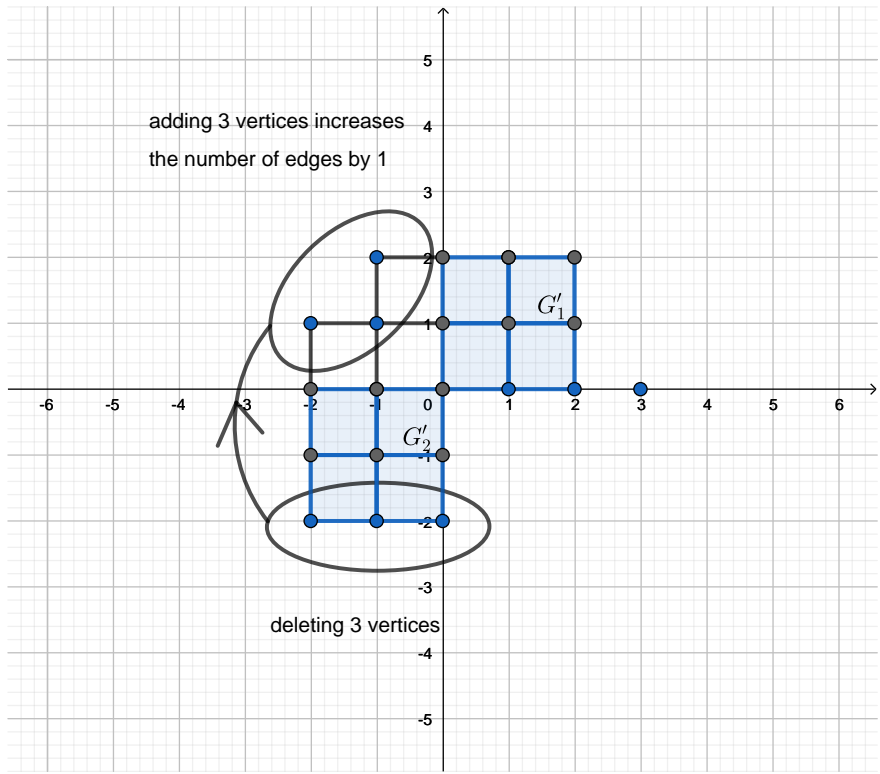}
\caption{The subcase of {\it (A)} when the basic polyominoes of the graphs $G'_1$ and $G'_2$ are squares of size $\geq 2$.} 
\label{Figure8.pdf} 
\end{center}
\end{figure}

{\it (B) - (C)}: Deleting the two vertices of $G_1$ (resp., $G_2$) which are not in $G_2$ (resp., $G_1$) from $G_c(\mathcal {P})$ together with the two edges adjacent to them yields the contact graph of an LS-packing of $n-2$ unit diameter disks having maximum contact number. Thus, by induction Theorem~\ref{2D-crystallization} applies to this contact graph on $n-2$ vertices and so, it follows that one can add two vertices to it by choosing proper positions for them such that one obtains a contact graph having $\lfloor 2n-2\sqrt{n}\rfloor+1$ edges. This contradicts Theorem~\ref{2D-theorem}, showing that {\it (B)} (resp., {\it (C)}) cannot hold either.

{\it (D) - (E)}: Deleting the degree one vertex of $G_1$ (resp., $G_2$) which is not in $G_2$ (resp., $G_1$) from $G_c(\mathcal {P})$ together with the edge adjacent to it yields the contact graph of an LS-packing of $n-1$ unit diameter disks having maximum contact number. Thus, by induction Theorem~\ref{2D-crystallization} applies to this contact graph on $n-1$ vertices and so, one obtains that it must be the type described in Case (i) of Theorem~\ref{2D-crystallization} (otherwise one could increase the number of edges of $G_c(\mathcal {P})$ by one, a contradiction). It follows that $G_c(\mathcal {P})$ possesses the properties listed under Case (iii) of Theorem~\ref{2D-crystallization}.

\vskip2.0cm

{\Large{\bf Acknowledgements}\par}
\bigskip

\noindent The author would like to thank the anonymous referee for careful reading and valuable comments.

\small

\bigskip


\noindent K\'aroly Bezdek \\
\small{Department of Mathematics and Statistics, University of Calgary, Canada}\\
\small{Department of Mathematics, University of Pannonia, Veszpr\'em, Hungary\\
\small{E-mail: \texttt{bezdek@math.ucalgary.ca}}

\end{document}